\theoremstyle{plain}
\newtheorem{Theorem}{Theorem}[section]
\newtheorem{Lemma}[Theorem]{Lemma}
\newtheorem{Proposition}[Theorem]{Proposition}
\newtheorem{Definition}[Theorem]{Definition}
\newcommand{\unity}{{1\!\!\!\:\mathrm{l}}}
\newcommand{\Ass}{\mathcal A}
\newcommand{\Mss}{\mathcal M}
\newcommand{\Oss}{\mathcal O}
\newcommand{\Rss}{\mathcal R}
\newcommand{\Sss}{\mathcal S}
\newcommand{\ci}{i}
\newcommand{\C}{\mathbb{C}}
\newcommand{\R}{\mathbb{R}}
\newcommand{\N}{\mathbb{N}}
\DeclareMathOperator{\ad}{ad}
\newcommand{\ind}[1]{_{\mbox{\scriptsize{#1}}}}
\DeclareMathAlphabet{\Mat}{U}{eur}{m}{n} 
\DeclareMathAlphabet{\Set}{U}{eur}{m}{n} 
\newcommand{\Sh}[1]{\mathcal{#1}} 
\begin{document}
\author[S.~Klein]{Sebastian Klein}
\email{s.klein@math.uni-mannheim.de}
\address{Mathematics Chair III\\
Universit\"at Mannheim\\
D-68131 Mannheim, Germany}
\author[E.~L\"ubcke]{Eva L\"ubcke}
\email{eva.luebcke@gmail.com}
\address{Mathematics Chair III\\
Universit\"at Mannheim\\
D-68131 Mannheim, Germany}
\author[M.~Schmidt]{Martin Ulrich Schmidt}
\email{schmidt@math.uni-mannheim.de}
\address{Mathematics Chair III\\
Universit\"at Mannheim\\
D-68131 Mannheim, Germany}
\author[T.~Simon]{Tobias Simon}
\email{tsimon@mail.uni-mannheim.de}
\address{Mathematics Chair III\\
Universit\"at Mannheim\\
D-68131 Mannheim, Germany}
\title{Burchnall-Chaundy Theory}
\begin{abstract}
The Burchnall-Chaundy theory concerns the classification of all pairs of commuting ordinary differential operators. We phrase this theory in the language of spectral data for integrable systems.

In particular, we define spectral data for rank 1 commutative algebras $A$ of ordinary differential operators. We solve the inverse problem for such data, i.e.~we prove that the algebra $A$ is (essentially) uniquely determined by its spectral data. The isomorphy type of $A$ is uniquely determined by the underlying spectral curve. 
\end{abstract}
\maketitle

\section{Introduction}

The Burchnall-Chaundy theory (\cite{BC1,BC2,BC3}) concerns the classification of all pairs $(P,Q)$ of commuting ordinary differential operators $P$ and $Q$ of order $m$ and $n$, respectively. Burchnall and Chaundy carried out their work before the relationship between integrable systems and Riemann surfaces or complex curves, i.e.~the spectral theory for integrable systems, was discovered in the course of the investigation of the integrable system defined by the Korteweg-de~Vries equation. 

The main purpose of the present article is to rephrase the Burchnall-Chaundy theory in terms of the theory of spectral data for integrable systems. For this the Krichever construction and the theory of Baker-Akhiezer functions will play important roles. Because the spectral curve can have singularities, we will use the description of these concepts for analytic singular curves in \cite{KLSS}. Our construction can also serve as an explanation of the relation of these two concepts to differential operators.

We will consider algebras $A$ that are generated by pairs $(P,Q)$ of commuting differential operators of order $m$, $n$. In Section~\ref{Se:direct} we will associate to any such algebra a holomorphic matrix-valued function $M: \mathbb{C} \to \mathbb{C}^{m\times m},\,\lambda\mapsto M(\lambda)$, and thereby spectral data which are composed of a generally singular complex curve $X'$ describing the eigenvalues of $(M(\lambda))_{\lambda\in\mathbb{C}}$ and a second datum describing the corresponding eigenvector bundle. In contrast to parts of Burchnall's and Chaundy's original theory, we will here restrict ourselves to the case where the orders $m$ and $n$ of the generating differential operators are relatively prime. This restriction corresponds to the algebra $A$ being of rank $1$, which means by definition that the eigenspaces of $M(\lambda)$ are generically $1$-dimensional. In this situation the eigenvectors of $M(\lambda)$ comprise a holomorphic line bundle $\Lambda'$ on $X'$. 

Whenever $X'$ has no singularities and is therefore a Riemann surface, the well-known 1--1 correspondence between line bundles and divisors on Riemann surfaces is often used to define spectral data as the pair $(X',D)$ of the eigenvalue curve $X'$ and the divisor $D$ corresponding to the eigenline bundle \,$\Lambda'$\,. In order to define spectral data in a similar manner also for complex curves $X'$ with singularities, the concept of a divisor needs to be generalised. The proper generalisation to use in this context is the concept of \emph{generalised divisors} introduced by Hartshorne \cite{Ha86}, at first for Gorenstein curves. In this sense, a generalised divisor on $X'$ is a subsheaf of the sheaf of meromorphic functions on $X'$ which is locally finitely generated over the sheaf of holomorphic functions on $X'$. The usefulness of this concept for our purposes is expressed by the fact that one again has a 1--1 correspondence between line bundles and generalised divisors on complex curves $X'$. By virtue of this fact, we will define the spectral data corresponding to a rank 1 commutative algebra $A$ (essentially) as the pair $(X',\Sss')$ comprising the eigenvalue curve $X'$ and the generalised divisor $\Sss'$ on $X'$ that corresponds to the eigenline bundle $\Lambda'$ on $X'$. It is also one of the purposes of this paper to explore the extent of the usefulness of generalised divisors in Hartshorne's sense, also for non-Gorenstein curves, and to convince the reader of their manifest value. 

We do not consider the case of commutative algebras of rank higher than $1$ (which are generated by differential operators whose degrees are not relatively prime), because in this case the eigenvector bundle of $M(\lambda)$ is a vector bundle of rank higher than $1$. Such vector bundles do not correspond to generalised divisors, because the sheaf of their sections is not contained in the sheaf of meromorphic functions on $X'$. For this reason the construction and investigation of spectral data in this case would require a very different theory. 

In Section~\ref{Se:inverse} we will solve the inverse problem for the spectral data thus defined for rank 1 algebras $A$ of commuting differential operators. This means that we will prove that $A$ is essentially uniquely determined by its spectral data (in fact, the domain of definition of the differential operators is extended to a certain maximum), and we will also see how to reconstruct $A$ from its spectral data (see Theorems~\ref{T:inverseproblem} and \ref{T:inverseproblemsolution}). This constitutes the rephrasing of Burchnall's and Chaundy's main classification result in terms of the present, modern concepts. 

We will additionally show that two rank 1 algebras of commuting differential operators are isomorphic to each other as algebras if and only if the corresponding spectral curves $X'$ are biholomorphic (see Theorem~\ref{P:isomorphic-algebras}). In other words, the family of all rank 1 algebras of commutative differential operators that are isomorphic to a given one $A$ can be generated by taking the spectral data $(X',\mathcal{S}')$\, corresponding to \,$A$\, and then varying the spectral divisor \,$\mathcal{S}'$\, throughout its connected component in the space of generalised divisors on \,$X'$\,. To the best of our knowledge, this statement is new in the sense that it has no counterpart in Burchnall's and Chaundy's classical work. 

We know from the discussion in \cite[Section~4]{KLSS} that the pairs \,$(X',\mathcal{S}')$\, occur in families where the complex curves $X'$ are partial normalisations, i.e.~branched one-fold coverings, of one another, and the generalised divisors \,$\mathcal{S}'$\, are direct images under the corresponding covering maps. These spectral data obtained by partial normalisation correspond to commutative algebras which contain $A$ as subalgebra. Such families always contain one member \,$(X'',\mathcal{S}'')$\, of minimal \,$\delta$-invariant, i.e.~minimal singularity. $X''$ was called the $\mathcal{S}'$-halfway normalisation of $X'$ in \cite{KLSS}. This minimal member corresponds to the maximal commutative rank 1 algebra which contains $A$, i.e.~to the centraliser of $A$ in the algebra of all ordinary differential operators. This construction also permits to find pairs \,$(X''',\mathcal{S}''')$\, which are below \,$(X',\mathcal{S}')$\, by a branched one-fold covering and so that \,$X'''$\, has arbitrarily large \,$\delta$-invariant. Such pairs correspond to rank 1 subalgebras of the given commutative rank 1 algebra $A$. 

We now begin our work by deriving a certain standard form for pairs of commuting differential operators in Section~\ref{Se:algebra}, which will facilitate the construction of the spectral data. 

\section{The algebra of differential operators}
\label{Se:algebra}

Let us first introduce the algebra of ordinary differential operators. We consider three different algebras of differential operators:
\begin{enumerate}
\item[Case 1:] The domain is an open interval $I=(a,b)$ and the algebra is $\Ass(I):=\C^{\infty}(I,\R)[D]$ where $D=\frac{d}{dt}$ with parameter $t\in I$.
\item[Case 2:] The domain $I$ is a real $1$-dimensional non-compact submanifold of $\C$ which is simply connected and the algebra is $\Ass(I):=C^{\infty}(I,\C)[D]$ where $D=\frac{d}{dz}$ with parameter $z\in I$.
\item[Case 3:] The domain is an open, connected subset $I$ of $\C$ and the algebra is $\Ass(I):=\Oss_I[D]$ where $D=\frac{d}{dz}$ with parameter $z\in I$.
\end{enumerate} 
Along with the domain \,$I$\, we fix a point \,$t_0 \in I$\, in any of these cases. 

We write these polynomials as
\[P=\alpha_mD^m+\ldots+\alpha_0.\] 
We denote the operator of multiplication with a function $f$ also by $f$. The algebra results as a subalgebra of the linear endomorphisms of the coefficient functions defined on $I$.
Due to the Leibniz rule the commutator of the differential operator $D$ and the operator of multiplication with a coefficient function $f$ acting on  functions on $I$ is given by the multiplication with $f'$, the derivative of $f$. Therefore, we set $Df-fD=f'$.
\begin{Lemma} \label{lem:ordinary differentials}
For any coefficient function $f$ on $I$ and any $n\in{\mathbb N}$ the composition of the operator $D^n$ with  $f$ acting on the functions on $I$ satisfies the identity
$$D^nf=\sum\limits_{0 \leq i\leq n}\binom{n}{i}f^{(i)}D^{(n-i)}.$$
\end{Lemma}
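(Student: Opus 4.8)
The plan is to prove the generalized Leibniz formula by induction on $n$, which is the natural approach for an identity of this form. The base case $n=0$ is trivial since $D^0 f = f = \binom{0}{0} f^{(0)} D^0$, and the case $n=1$ is exactly the relation $Df = fD + f'$ that is built into the algebra by the convention $Df - fD = f'$ stated just before the lemma.

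For the inductive step, I would assume the formula holds for some $n$ and compute $D^{n+1} f = D \cdot (D^n f)$ by applying $D$ on the left to the inductive hypothesis. The key mechanical step is to move the single factor of $D$ past each coefficient function $f^{(i)}$ using the basic commutation rule $D f^{(i)} = f^{(i)} D + f^{(i+1)}$. Carrying this out term by term turns each summand $\binom{n}{i} f^{(i)} D^{n-i}$ into $\binom{n}{i} f^{(i)} D^{n+1-i} + \binom{n}{i} f^{(i+1)} D^{n-i}$, so that $D^{n+1} f$ becomes a sum of two families of terms. After reindexing the second family (replacing $i$ by $i-1$), the coefficient of $f^{(i)} D^{n+1-i}$ in the combined expression is $\binom{n}{i} + \binom{n}{i-1}$.

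The only real content of the argument is then the Pascal identity $\binom{n}{i} + \binom{n}{i-1} = \binom{n+1}{i}$, which collapses the two sums into the single desired sum $\sum_{0 \le i \le n+1} \binom{n+1}{i} f^{(i)} D^{n+1-i}$, with the boundary terms $i=0$ and $i=n+1$ accounted for by the conventions $\binom{n}{-1} = 0$ and $\binom{n}{n+1} = 0$. This completes the induction.

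I do not expect any genuine obstacle here; the statement is a formal consequence of the Leibniz product rule encoded in $Df - fD = f'$, and the proof is essentially bookkeeping. The only point requiring a modicum of care is the reindexing of the sum and the correct handling of the two boundary terms, so that the binomial coefficients combine cleanly via Pascal's rule; everything else is routine.
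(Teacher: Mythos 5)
Your proof is correct: the induction is sound, the commutation rule $Df^{(i)} = f^{(i)}D + f^{(i+1)}$ is exactly the relation $Df-fD=f'$ built into the algebra, and Pascal's identity together with the boundary conventions $\binom{n}{-1}=\binom{n}{n+1}=0$ closes the inductive step cleanly. The paper reaches the same formula by a different (slicker, less explicit) route: it introduces the adjoint operator $\ad(D):A\mapsto [D,A]$, rewrites the Leibniz rule as $\ad(D)f=f'$ and $Df=(\ad^0(D)f)D^1+(\ad^1(D)f)D^0$, and then asserts $D^nf=\sum_i\binom{n}{i}\bigl(\ad^i(D)f\bigr)D^{n-i}$ in one step. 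That step is the binomial theorem applied to left multiplication by $D$ written as the sum of right multiplication by $D$ and $\ad(D)$, which is legitimate because these two operations commute (a consequence of associativity), though the paper does not spell this out. So the underlying combinatorics is identical; your version makes the induction and the Pascal-rule bookkeeping explicit where the paper delegates it to the operator-calculus binomial expansion, which is arguably the one point in the paper's proof that a careful reader has to fill in anyway. No gap in either argument.
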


\begin{proof}
Let $\ad(D)$ denote the operator $A\mapsto[D,A]$ acting on the linear operators on the space of smooth functions. Then the Leibniz rule may be written as $\ad(D)f=f'$. Hence we have the identity $Df=fD+\ad(D)f=(\ad^0(D)f)D^1+(\ad^1(D)f)D^0$. This implies
$$D^nf=\sum\limits_{0 \leq i\leq n}\binom{n}{i}
\left(\ad ^{i}(D)f\right)D^{(n-i)}=
\sum\limits_{0 \leq i\leq n}\binom{n}{i}f^{(i)}D^{(n-i)}.$$
\end{proof}

If $P$ and $Q$ are two elements of $\Ass(I)$, then due to this formula the product $PQ$ is again an element of $\Ass(I)$. Since this product was derived from the action of the ordinary differential operators on the smooth functions, it endows $\Ass(I)$ with the structure of an associative algebra (a subalgebra of the operators on the smooth functions on $I$). The order of an element $P$ of $\Ass(I)$ is the highest number $m$ whose coefficient $\alpha_m$ does not vanish identically on $I$. Let us first use two transformations in order to bring a commutative subalgebra into standard form.
The first transformation changes the domain $I$ of the corresponding coefficient functions of $\Ass(I)$.
If $\xi$ is a smooth resp.~holomorphic, invertible function on $I$, the second transformation $P\mapsto\xi^{-1}P\xi$ is an inner automorphism of $\Ass(I)$. Now these two transformations may be used in order to bring a commuting pair of differential operators into standard form:
\begin{Proposition}\label{prop:standard form}
Let $P\in \Ass(I)$ be a differential operator of order $m$. Then $P$ can be transformed into an element $\widetilde{P}\in \Ass(\tilde{I})$ with highest coefficient equal to $1$ and  vanishing second highest coefficient. Thereby all operators $Q\in \Ass(I)$ which commute with $P$ are transformed into $\widetilde{Q}\in \Ass(\tilde{I})$ which commute with $\widetilde{P}$ and have constant highest and second highest coefficient.
%
%
\end{Proposition}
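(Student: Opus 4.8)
The two permitted moves are a change of the domain variable and conjugation by an invertible function; the plan is to use the first to normalise the leading coefficient of $P$, the second to kill its subleading coefficient, and then to read off the last assertion from the top two coefficients of the commutator $[\widetilde P,\widetilde Q]$.

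First I would normalise the highest coefficient. Writing $P=\alpha_m D^m+\dots+\alpha_0$ with $\alpha_m$ not identically zero, shrink $I$ around the fixed point $t_0$ so that $\alpha_m$ is nowhere vanishing; a branch of $\alpha_m^{-1/m}$ then exists since $I$ is simply connected. Introduce the new variable $s(t)=\int_{t_0}^t \alpha_m(\tau)^{-1/m}\,d\tau$ on $\tilde I:=s(I)$. Under this coordinate change $D_t=s'(t)\,D_s=\alpha_m^{-1/m}D_s$, and since commuting $D_s$ past the function $\alpha_m^{-1/m}$ only produces lower-order terms, the leading term of $P$ becomes $\alpha_m\,(\alpha_m^{-1/m})^m D_s^m=D_s^m$. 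Thus after this change of variable $P$ has highest coefficient $1$; I relabel $D:=D_s$ and write $P=D^m+\beta_{m-1}D^{m-1}+\dots$.

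Next I would remove the subleading coefficient by the inner automorphism $P\mapsto\xi^{-1}P\xi$. From $D\xi=\xi D+\xi'$ one gets $\xi^{-1}D\xi=D+u$ with $u:=\xi'/\xi$, hence $\xi^{-1}P\xi=(D+u)^m+\beta_{m-1}(D+u)^{m-1}+\dots$. The coefficient of $D^m$ is unchanged (equal to $1$), while the coefficient of $D^{m-1}$ becomes $m\,u+\beta_{m-1}$ (the top coefficient of $(D+u)^m$ is $mu$). Choosing $\xi=\exp\!\bigl(-\tfrac1m\int_{t_0}^s\beta_{m-1}\bigr)$ makes $u=-\beta_{m-1}/m$, so this coefficient vanishes. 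This produces $\widetilde P$ with highest coefficient $1$ and vanishing second coefficient, establishing the first assertion.

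Finally, both moves are algebra isomorphisms, so any $Q$ commuting with $P$ is carried to some $\widetilde Q$ commuting with $\widetilde P$; it remains to show that if $\widetilde Q=b_nD^n+b_{n-1}D^{n-1}+\dots$ has order $n$ and $[\widetilde P,\widetilde Q]=0$, then $b_n$ and $b_{n-1}$ are constant. Expanding $\widetilde P\widetilde Q$ and $\widetilde Q\widetilde P$ by Lemma~\ref{lem:ordinary differentials}, the order-$(m+n)$ terms cancel, and — using that $\widetilde P$ is monic of order $m$ with vanishing $D^{m-1}$-coefficient — the coefficient of $D^{m+n-1}$ in the commutator collapses to $m\,b_n'$; hence $[\widetilde P,\widetilde Q]=0$ forces $b_n'=0$. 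Feeding $b_n=\mathrm{const}$ back in, the coefficient of $D^{m+n-2}$ collapses to $m\,b_{n-1}'$ (the would-be contributions involving the $D^{m-2}$-coefficient of $\widetilde P$ cancel between the two orderings), so $b_{n-1}'=0$ as well. I expect this last step — isolating precisely the two top commutator coefficients and watching the remaining terms cancel thanks to the normalisation of $\widetilde P$ — to be the only real obstacle; the two normalisations themselves are routine once the branch/shrinking issue for $\alpha_m^{-1/m}$ is dealt with.
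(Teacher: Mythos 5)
Your proposal is correct and follows essentially the same route as the paper: a change of variable $s=\int\alpha_m^{-1/m}\,dt$ to make $P$ monic, conjugation by $\exp\bigl(-\tfrac1m\int\beta_{m-1}\bigr)$ to kill the subleading coefficient, and then reading off $b_n'=0$ and $b_{n-1}'=0$ from the $D^{m+n-1}$ and $D^{m+n-2}$ coefficients of the commutator (the paper records these as $n\beta_n\alpha_m'-m\alpha_m\beta_n'$ and a multiple of $n\alpha_{m-1}'-m\beta_{n-1}'$ before specialising, which matches your collapsed expressions). The only cosmetic difference is that the paper runs the argument separately through its three cases of domains, while you treat them uniformly via simple connectedness.
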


\begin{proof}
We first consider case 1. 
 If $\xi(t)$ is a diffeomorphism of an open interval $I$ onto another open interval $\tilde{I}$, the vector field $\frac{d}{dt}$ is transformed under this diffeomorphism onto the vector field $1/\xi'(t) \frac{d}{d\xi}$. This transformation therefore induces an isomorphism of the algebras $\Ass(I)$ and $\Ass(\tilde{I})$. We now construct such a deformation so that the highest coefficients of $\widetilde{P}$ and ultimately $\widetilde{Q}$ are equal to $1$.  

The highest coefficient $\alpha_m$ of $P$ cannot vanish identically. So there exists a subinterval of $I$ on which $\alpha_m$ has no roots. Then there exists an invertible function $\chi$ such that $\alpha_m=\chi^m$.  Hence, $\frac{d\xi}{dt}=\chi^{-1}$ 
and $\xi = \int \chi^{-1}dt$ is strictly monotonous and therefore a diffeomorphism of some subinterval of $I$ with some interval $\tilde{I}$ with the desired properties. Then $P\in\Ass(\tilde{I})$ has highest coefficient one. We define $\eta:=\frac{\alpha_{m-1}}{m}$. Then the inner automorphism corresponding to the function $\zeta=\exp\left(-\int\eta \,dt\right)$ transforms $P$ into a differential operator of the desired form.
If an operator  $Q\in\Ass(I)$ of order $n$  commutes with $P$, then the above transformation maps $Q$ to an operator $\widetilde Q\in\Ass(\tilde I)$. Since this transformation is an algebra homomorphism, $\widetilde{Q}$ commutes with $\widetilde{P}$.
Lemma \ref{lem:ordinary differentials} yields that the coefficient of $D^{m+n-1}$ in $\widetilde Q\widetilde P-\widetilde P\widetilde Q$ is equal to
\begin{align}\label{eq:highest coefficient}
n\widetilde \beta_n\widetilde \alpha_m'-m\widetilde \alpha_m\widetilde \beta_n'=0,
\end{align}
where $\widetilde{\alpha}_m$ and $\widetilde{\beta}_n$ are the highest coefficients of $\widetilde P$ and $\widetilde Q$, respectively. Therefore, $\widetilde\beta_n$ is constant if $\widetilde\alpha_m$ is constant.
The coefficient of $D^{m+n-2}$ in $\widetilde Q\widetilde P-\widetilde P\widetilde Q$ is due to Lemma \ref{lem:ordinary differentials} proportional to 
$$n\widetilde\alpha_{m-1}'-m\widetilde\beta_{n-1}'.$$
Therefore, $\widetilde\beta_{n-1}$ is constant if $\widetilde\alpha_{m-1}$ is constant.

In case 2, the proof is essentially the same as in case 1 with all (sub)intervals replaced by (sub)manifolds of $\C$. We now choose $\xi=\int \eta^{-1} dz$ which is complex-valued. Due to the inverse function theorem, $\xi$ defines a diffeomorphism on a possibly smaller submanifold of $I$ onto another submanifold $\tilde{I}$.

In case 3, there are no big changes from the proof of case 1 either. The (sub)intervals are replaced by open subsets of $\C$, the smooth maps are replaced by holomorphic maps and the diffeomorphisms by biholomorphic maps. Again the inverse function theorem gives that $\xi$ is a biholomorphic function on a possibly smaller open subset of $I$.
\end{proof}

We remark that if $m$ is equal to one, an inductive application of Proposition \ref{prop:standard form} shows that the coefficients of $Q$ are constant and $Q$ is a polynomial with respect to $P$.  
In the sequel, we shall only consider elements of a commutative subalgebra of $\Ass(I)$.  

\begin{Definition}
A commutative subalgebra of $\mathcal{A}(I)$ is called of rank $1$ if it contains two differential operators of coprime orders. We denote the set of such subalgebras by $\mathcal{R}$ in the following. 
\end{Definition}
In the sequel, we consider only commutative subalgebras of rank $1$.

\begin{Lemma}
\label{L:d0}
A commutative subalgebra $A$ of $\Ass(I)$ is of rank 1 if and only if there exists $d_0\in \mathbb{N}$ so that $A$ contains elements of every degree $d \geq d_0$.
\end{Lemma}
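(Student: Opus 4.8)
The plan is to reduce the rank~1 condition to a statement about the set of orders realised inside $A$, and then to invoke the classical description of the numerical semigroup generated by two coprime integers. Throughout, I write $S:=\{\ord(R):R\in A,\ R\neq 0\}\subseteq\Z_{\ge 0}$ for the set of orders of the nonzero elements of $A$; note that $0\in S$ because $A$ contains the scalar operators. The backward implication is then immediate: if $A$ contains an element of every order $d\ge d_0$, then it contains operators of the two consecutive orders $d_0$ and $d_0+1$, and since $\gcd(d_0,d_0+1)=1$ these are two operators of coprime orders, so $A$ is of rank~1 by definition.

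For the forward implication the first and decisive step is to show that $S$ is closed under addition, i.e.\ that $\ord(RR')=\ord(R)+\ord(R')$ for all nonzero $R,R'\in A$. Equivalently, the product of the highest coefficients of $R$ and $R'$ must not vanish identically. This is the point at which the three cases for the domain $I$ genuinely differ: in Case~3 the highest coefficients are holomorphic functions on a connected domain and therefore lie in an integral domain, so the claim is clear; but in Case~1 the coefficients are only smooth and do not form an integral domain, so a direct argument fails. To treat all three cases uniformly I would fix a nonzero $P\in A$ of some order $m\ge 1$ (such $P$ exists, since a rank~1 algebra cannot consist of scalars alone) and apply Proposition~\ref{prop:standard form} to pass to the standard form. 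Because $A$ is commutative, every element of $A$ commutes with $P$, so after this order- and rank-preserving transformation every element of $A$ acquires a \emph{constant} highest coefficient. The product of two nonzero constants is again a nonzero constant, and hence additivity of $\ord$ holds in all three cases.

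With additivity established, the remainder is purely arithmetic. By hypothesis $A$ contains operators of coprime orders $m$ and $n$, so closedness of $S$ under addition gives $S\supseteq\langle m,n\rangle=\{am+bn:a,b\in\Z_{\ge 0}\}$, the numerical semigroup generated by $m$ and $n$. By the theorem of Sylvester--Frobenius, every integer $d\ge (m-1)(n-1)=mn-m-n+1$ belongs to $\langle m,n\rangle$ and therefore to $S$. Setting $d_0:=(m-1)(n-1)$ thus produces an element of $A$ of every order $d\ge d_0$, which is the desired conclusion.

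I expect the only genuine obstacle to be the additivity of the order function in the smooth setting (Case~1), where the absence of an integral-domain structure on the coefficient functions must be circumvented; once Proposition~\ref{prop:standard form} is used to make the highest coefficients constant, this difficulty disappears and the argument collapses to the standard Frobenius-number computation for two coprime integers.
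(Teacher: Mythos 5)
Your proof is correct and takes essentially the same route as the paper: the converse is the same observation (pick two coprime orders $\geq d_0$), and the forward direction is the same numerical-semigroup argument for two coprime orders, with additivity of orders secured by the constancy of highest coefficients. The only differences are cosmetic: the paper performs the Bézout/Frobenius arithmetic by hand with the weaker bound $d_0 = mn$ rather than citing Sylvester--Frobenius, and it treats the reduction to standard form via Proposition~\ref{prop:standard form} (hence constant leading coefficients and order-additivity, including in the smooth Case~1) as a standing convention surrounding the lemma, whereas you invoke it explicitly inside the proof.
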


\begin{proof}
For $A \in \Rss$, $A$ contains a pair $(P,Q)$ of differential operators of coprime orders $n,m$. By B\'{e}zout's identity, there exist integers $a,b$ with $1=an+bm$. If $n=1$ or $m=1$, then $A$ contains operators of every positive order, so we now suppose $n,m \geq 2$. Then $ab<0$, and we suppose without loss of generality that $a>0$ and $b<0$. For $d \geq nm$ there exists an integer $l$ so that $dan-d \leq lmn \leq dan$, and then $d = (da-lm)n + (db+ln)m$ with $da-lm \geq 0$ and  $db+ln=\tfrac{d-dan+lnm}{m} \geq 0$. Then $P^{da-lm}\cdot Q^{db+ln} \in A$ is of order $d$. 

The converse follows because for every $d_0 \in \N$ there exist two coprime numbers \,$n,m \geq d_0$.
\end{proof}

The observation that the highest coefficients of all elements of $A\in\mathcal{R}$ are constant allows to define the degrees of the elements intrinsically.

\begin{Lemma}
\label{L:deg}
For $A\in\mathcal{R}$ and $P\in A\setminus\{0\}$ the degree $\deg(P)$ is equal to $\dim(A/PA)$ with $PA=\{PQ\mid Q\in A\}$
\end{Lemma}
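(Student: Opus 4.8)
The plan is to compare the natural degree filtration on $A$ with its image in the quotient $A/PA$. The one structural input I need is that every nonzero element of $A$ has a constant, nonvanishing highest coefficient, which is the observation recorded just before the statement; it is obtained by first applying Proposition \ref{prop:standard form} to a pair of coprime-order generators to normalise the highest coefficient of some element to $1$, and then propagating constancy to an arbitrary $R\in A$ via the relation (\ref{eq:highest coefficient}): since $R$ commutes with that normalised operator, the coefficient of the top commutator term forces the highest coefficient of $R$ to have vanishing derivative. As $\deg(P)$ and $\dim(A/PA)$ are both invariant under the algebra isomorphisms of Proposition \ref{prop:standard form}, I may assume without loss of generality that $A$ is in this form. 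The first thing I would record is that degrees are then additive: for nonzero $Q_1,Q_2\in A$ the highest coefficient of $Q_1Q_2$ is the product of the (constant, nonzero) highest coefficients, so $\deg(Q_1Q_2)=\deg(Q_1)+\deg(Q_2)$. In particular $A$ is an integral domain and $\deg(PQ)=m+\deg(Q)$, where $m:=\deg(P)$.

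Next I would introduce the value semigroup $S:=\{\deg(Q)\mid Q\in A\setminus\{0\}\}$, a subset of the nonnegative integers, together with the filtration $V_d:=\{Q\in A\mid \deg(Q)\le d\}\cup\{0\}$. Constancy of highest coefficients gives the key finiteness estimate: if $Q_1,Q_2$ both have degree exactly $d$, with highest coefficients $c_1,c_2$, then $c_2Q_1-c_1Q_2$ has degree $<d$, so $\dim(V_d/V_{d-1})\le 1$, with equality precisely when $d\in S$. Summing the graded pieces yields $\dim V_d=\#(S\cap[0,d])$. By Lemma \ref{L:d0} the semigroup $S$ contains every integer $\ge d_0$, hence it has only finitely many gaps and each $\dim V_d$ is finite.

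The core computation is the dimension of the image of $V_d$ in $A/PA$. Since multiplication by $P$ is injective (integral domain) and raises degree by exactly $m$, one checks $PA\cap V_d=P\cdot V_{d-m}$ (with the convention $V_e=\{0\}$ for $e<0$), so $\dim(PA\cap V_d)=\dim V_{d-m}$. Therefore the image of $V_d$ in $A/PA$, namely $V_d/(PA\cap V_d)$, has dimension
$$\dim V_d-\dim V_{d-m}=\#\big(S\cap(d-m,d]\big),$$
the number of semigroup elements in a window of $m$ consecutive integers. Once $d$ is large enough that $(d-m,d]\subseteq S$ — possible because $S$ is cofinite — this count is exactly $m$.

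Finally I would pass to the limit: the images of the $V_d$ form an increasing chain of subspaces of $A/PA$ whose union is all of $A/PA$ (every element of $A$ lies in some $V_d$) and whose dimensions are nondecreasing and eventually equal to $m$. A nested chain of subspaces of constant finite dimension is eventually constant, so $A/PA$ equals this stable subspace and $\dim(A/PA)=m=\deg(P)$. I expect the only genuinely delicate step to be the constancy/additivity input in the first paragraph, where Proposition \ref{prop:standard form} and the commutator relation (\ref{eq:highest coefficient}) are essential; the filtration bookkeeping and the stabilisation argument are routine once additivity of degrees is secured.
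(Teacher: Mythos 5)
Your proof is correct and follows essentially the same route as the paper: both arguments rest on the constancy of highest coefficients (giving injectivity of $Q\mapsto PQ$ and additivity of degrees), on Lemma~\ref{L:d0} (cofiniteness of the degree set), and on counting the degrees occurring in $A$ but not in $PA$. The paper evaluates this count via the set identity $\mathbb{N}_0\setminus(N+\deg P)=\{0,\dots,\deg P-1\}\,\dot\cup\,((\mathbb{N}_0\setminus N)+\deg P)$, whereas you obtain the same number as the stable value of $\#\bigl(S\cap(d-m,d]\bigr)$ through an explicit filtration; this is only a cosmetic difference, and your version in fact spells out the step $\dim(A/PA)=\#(N\setminus(N+\deg P))$ that the paper leaves implicit.
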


\begin{proof}
By Lemma~\ref{L:d0} for $A\in\mathcal{R}$ the complement of the set
$$N=\{\deg(P)\mid P\in A\}$$
in $\mathbb{N}_0$ is finite. Since the highest coefficients of the elements of $A$ are constant, for $P\in A\setminus\{0\}$ the map $A\to A$ with $Q\mapsto PQ$ is injective. Therefore $\{\deg(PQ)\mid Q\in A\}$ is equal to $N+\deg(P)=\{d+\deg(P)\mid d\in N\}$. Furthermore, $\dim(A/PA)=\#(N\setminus(N+\deg(P))$. The set $N+\deg(P)$ is a subset of $N$, since $PA$ is a subspace of $A$. Therefore $N\setminus(N+\deg(P))$ is equal to $(\mathbb{N}_0\setminus(N+\deg(P)))\setminus(\mathbb{N}_0\setminus N)$ and the cardinality of both sets are equal to $\#(\mathbb{N}_0\setminus(N+\deg(P)))-\#(\mathbb{N}_0\setminus N)$. This difference equals $\deg(P)$, since $\mathbb{N}_0\setminus(N+\deg(P))$ is the disjoint union $\{0,\ldots,\deg(P)-1\}\dot{\cup}((\mathbb{N}_0\setminus N)+\deg(P))$.
\end{proof}

We will always assume that differential operators are in standard form as in Proposition~\ref{prop:standard form}, which means that highest and second highest coefficients are constant. In constructing this standard form, we possibly made the domain \,$I$\, smaller. Our solution of the inverse problem (Theorem~\ref{T:inverseproblemsolution}(1)) will show that such commutative algebras are already uniquely determined by their restriction to arbitrarily small, open subsets of \,$I$\,. 

\section{The direct problem}
\label{Se:direct}

The following Theorem, together with some kind of converse, has already been shown in \cite{BC1}. We give a proof here with methods which better match the point of view we want to take in this work. 
\begin{Theorem}\label{th:commuting operators}
For two commuting ordinary differential operators $P$ and $Q$ of orders $m$ respectively $n$,  there exists a polynomial $f(\lambda,\mu)$ with constant coefficients with the following properties:
\begin{enumerate}
\item[(i)] If we associate to $\lambda$ the degree $m$ and to $\mu$ the degree $n$, then the common degree of $f(\lambda,\mu)$ is equal to $mn$. Moreover, the highest coefficient is equal to $\mu^m+c\lambda^n$ with some non-zero constant $c$.
\item[(ii)] The differential operator $f(P,Q)$ is identically equal to zero.
\end{enumerate}

\end{Theorem}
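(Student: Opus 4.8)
The plan is to realise $f$ as the characteristic polynomial of $Q$ acting on the solution space of $P\psi=\lambda\psi$, and then to read off both assertions from this description. Fix the base point $t_0\in I$ and, for each $\lambda\in\C$, let $V_\lambda=\ker(P-\lambda)$ be the space of (germs of) solutions of $P\psi=\lambda\psi$; since $P$ has order $m$ and constant leading coefficient $1$, one has $\dim V_\lambda=m$, with the basis $\psi_1(\cdot,\lambda),\dots,\psi_m(\cdot,\lambda)$ normalised by $\psi_k^{(i-1)}(t_0)=\delta_{ik}$. Because $Q$ commutes with $P$, we have $P(Q\psi)=\lambda Q\psi$, so $Q$ maps $V_\lambda$ into itself; let $M(\lambda)\in\C^{m\times m}$ be its matrix in this basis and set
\[
 f(\lambda,\mu)=\det\bigl(\mu\,\unity-M(\lambda)\bigr).
\]
By construction $f$ is monic of degree $m$ in $\mu$. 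The one genuinely technical point is that the entries of $M(\lambda)$, a priori only entire in $\lambda$, are in fact polynomials: right division of $Q$ by the monic operator $P-\lambda$ gives $Q=A_\lambda\,(P-\lambda)+R_\lambda$ with $\ord R_\lambda<m$ and coefficients depending polynomially on $\lambda$, and since $R_\lambda$ agrees with $Q$ on $V_\lambda$ the entries $M(\lambda)_{ik}=(R_\lambda\psi_k)^{(i-1)}(t_0)$ are polynomial in $\lambda$ (the derivatives $\psi_k^{(s)}(t_0)$ of order $s\ge m$ being polynomial in $\lambda$ by repeated use of $P\psi_k=\lambda\psi_k$). Hence $f\in\C[\lambda,\mu]$.

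Next I would prove (ii). For $\psi\in V_\lambda$ one has $Q^{j}\psi\in V_\lambda$, hence $P^{i}Q^{j}\psi=\lambda^{i}Q^{j}\psi$, so that $f(P,Q)\psi=f(\lambda,Q)\psi$; as $Q|_{V_\lambda}=M(\lambda)$ and $f(\lambda,\cdot)$ is the characteristic polynomial of $M(\lambda)$, the Cayley--Hamilton theorem gives $f(\lambda,Q)\psi=f(\lambda,M(\lambda))\psi=0$. Thus the differential operator $f(P,Q)$ annihilates every $V_\lambda$, $\lambda\in\C$. Now suppose $f(P,Q)\neq0$; it then has a well-defined order $N$ and a point $t_1$ where its leading coefficient does not vanish, so near $t_1$ its local solution space is exactly $N$-dimensional. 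But the restriction map to a neighbourhood of $t_1$ is injective on each $V_\lambda$ (a solution of the order-$m$ equation $P\psi=\lambda\psi$ vanishing on an open set vanishes identically by ODE uniqueness), and the spaces $V_\lambda$ for distinct eigenvalues remain independent after restriction (apply $\prod_{r\ne s}(P-\lambda_r)$ to a putative relation). Hence $f(P,Q)$ would annihilate an infinite-dimensional space of local solutions, which is impossible. Therefore $f(P,Q)=0$.

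Finally, for (i) I would exploit that $P$ and $Q$ are in the standard form of Proposition~\ref{prop:standard form}: $P=D^m+\dots$ is monic and $Q=\beta_nD^n+\dots$ has constant leading coefficient $\beta_n\neq0$, so $P^{i}Q^{j}$ has order exactly $mi+nj$ with leading term $\beta_n^{j}D^{mi+nj}$. Writing $f=\sum c_{ij}\lambda^i\mu^j$ and letting $d$ be the weighted degree (giving $\lambda$ the weight $m$ and $\mu$ the weight $n$), the top-order part of the operator equation $0=f(P,Q)$ yields the single scalar identity $\sum_{mi+nj=d}c_{ij}\beta_n^{j}=0$. Since $f$ is monic of degree $m$ in $\mu$, the monomial $\mu^m$ (of weight $mn$) occurs, so $d\ge mn$; and since $\gcd(m,n)=1$, for any $d>mn$ the weight-$d$ monomials with $0\le j\le m$ reduce either to a single monomial, which the scalar identity kills outright, or (when $m\mid d$) to a pair whose $\mu^m$-coefficient must vanish by monicity, again leaving a single monomial. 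This forces $d=mn$, where the only weight-$mn$ monomials are $(i,j)=(n,0)$ and $(0,m)$; monicity gives $c_{0,m}=1$, and the identity then reads $c_{n,0}+\beta_n^{m}=0$, so the weighted leading part is $\mu^m+c\lambda^n$ with $c=-\beta_n^m\ne0$, as claimed. The main obstacle is the polynomial dependence of $M(\lambda)$ on $\lambda$ in the first step; once that is secured, the Cayley--Hamilton annihilation and the leading-symbol identity combined with the coprimality of $m$ and $n$ deliver the precise degree statement with little further work.
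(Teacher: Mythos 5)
Your construction of $f$ coincides with the paper's: both take $f(\lambda,\mu)=\det(\mu\unity-M(\lambda))$, where $M(\lambda)$ is the matrix of $Q$ acting on $\ker(P-\lambda)$ in the basis normalised at $t_0$, and your polynomiality argument (right division $Q=A_\lambda(P-\lambda)+R_\lambda$ plus reduction of the higher derivatives $\psi_k^{(s)}(t_0)$ via $P\psi_k=\lambda\psi_k$) is essentially a repackaging of the paper's matrix $V(t,\lambda)$. After that the two proofs genuinely diverge, and in fact your logical order is reversed. The paper proves (i) first, by a weight analysis: assigning weight $i$ to $\alpha_i$, $j$ to $\beta_j$ and $1$ to each derivative, the entries $V_{kl}$ have weight $n+k-l$, so the characteristic polynomial is weighted-homogeneous of degree $mn$ and its top part can be read off from the free case $P=D^m$, $Q=\beta_0 D^n$; the resulting bound $\ord f(P,Q)\le mn$ is then used in the paper's proof of (ii), which produces one common eigenvector of $P$ and $Q$ per root of $f$ and establishes independence by a Vandermonde determinant. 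You instead prove (ii) first --- Cayley--Hamilton gives annihilation of the whole $m$-dimensional space $V_\lambda$ for every $\lambda$, and you need only finiteness (not a bound) of the order of $f(P,Q)$ --- and then extract (i) from the operator identity $f(P,Q)=0$ by comparing leading symbols, with the coprimality of $m$ and $n$ ruling out all weight-$d$ monomials for $d>mn$ and forcing the top part $\mu^m+c\lambda^n$, $c$ being minus the $m$-th power of the leading coefficient of $Q$. Both routes are sound; yours is more economical (no matrix asymptotics for (i), no common eigenvectors for (ii)), while the paper's weight analysis yields structural information on all coefficients of $f$ that it reuses later (e.g.\ in the proof of Lemma~\ref{L:normalize-bounded}).

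One caveat: your proof of (i) explicitly invokes $\gcd(m,n)=1$, which the statement of Theorem~\ref{th:commuting operators} does not assume. This is not a real defect of your argument: the statement is actually false without coprimality (for $P=Q=D^2$ one gets $f=(\mu-\lambda)^2$, whose top part is not of the form $\mu^2+c\lambda^2$), and the paper's own free-case evaluation of the highest coefficient is likewise valid only for coprime orders, since the eigenvalues of $\beta_0 U\ind{free}^n$ give $\bigl(\mu^{m/g}-\beta_0^{m/g}\lambda^{n/g}\bigr)^g$ with $g=\gcd(m,n)$. You should simply flag where the hypothesis enters; the coprime case is in any event the paper's standing rank-one setting.
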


\begin{proof}
In this proof we index the coefficients of \,$P$\, and \,$Q$\, differently from Section~\ref{Se:algebra} to simplify the characterisation of the degree of the coefficients of \,$f$\,. Specifically we write \,$P=D^m+\alpha_{1}D^{m-1}+\ldots+\alpha_m$\, and \,$Q=\beta_0D^n + \beta_1D^{n-1}+\ldots+\beta_n$\,. 

 For $\lambda\in\C$, we collect the derivatives
$\psi,\psi',\ldots,\psi^{(m-1)}$ of the solutions of the differential equation $(\lambda-P)\psi=0$ to a column-vector-valued function $\widehat\psi=(\widehat\psi_0,\dotsc,\widehat\psi_{m-1})^T$. Consequently, the differential equation $(\lambda-P)\psi=0$ is equivalent to the first order differential equation $\left(D-U(\,\cdot\,,\lambda)\right)\widehat\psi=0$ with the $m\times m$-matrix-valued function
\begin{equation}
\label{eq:def-U}
U(t,\lambda)=\begin{pmatrix}
0 & 1 & 0 & \ldots & 0\\
0 & 0 & 1 & \ldots & 0\\
\vdots & \vdots & \vdots & \ldots & \vdots\\
\lambda-\alpha_m(t) & -\alpha_{m-1}(t) & -\alpha_{m-2}(t) & \ldots & -\alpha_{1}(t)
\end{pmatrix}.
\end{equation}
With the help of the equation $P\psi = \lambda\psi$ we may express all derivatives of $\psi$ of order higher than $m-1$ in terms of the components of $\widehat\psi$:
That equation is equivalent to 
\[
\psi^{(m)} = \lambda\psi- \sum_{l=0}^{m-1}\alpha_{m-l} \psi^{(l)}  
= \lambda\hat\psi_0-\sum_{l=0}^{m-1}\alpha_{m-l} \hat\psi_{l}
\]
Differentiating this formula yields
\begin{align*}
\psi^{(m+1)} &= \lambda\psi'- \sum_{l=0}^{m-1}\alpha_{m-l} \psi^{(l+1)}-\sum_{l=0}^{m-1}\alpha'_{m-l} \psi^{(l)}    \\
&=\lambda\hat\psi_1\!-\!(\alpha_{m}'\!+\!\alpha_{1}\lambda\!-\!\alpha_{1}\alpha_m)\widehat\psi_0\!+\!\sum_{l=1}^{m-1}(\alpha_{1}\alpha_{m-l}\!-\!\alpha_{m-l+1}\!-\!\alpha'_{m-l}) \hat\psi_l. 
\end{align*}
The higher derivatives of $\psi$ can be obtained by inductively repeating this procedure. Note that the sum of the indices on the right hand side in each term plus the order of the derivative in this term always equals the order of the derivative on the left hand side. 

In particular, there exists a unique $m\times m$-matrix $V(t,\lambda)$ whose coefficients are polynomials in $\lambda$ and differential polynomials of the coefficients $\alpha_i$ of $P$ and $\beta_j$ of $Q$ such that $V(\,\cdot\,,\lambda)\widehat\psi=Q\widehat\psi$ for all $\psi$ in the kernel of $\lambda-P$.

For any complex number $\lambda\in{\mathbb C}$ we consider the solutions of the differential equation $(P-\lambda)\psi=0$. Due to the theory of ordinary differential equations, there exist exactly $m$ linear independent solutions $\psi_1,\ldots,\psi_m$ of this ordinary differential equation on $I$. Two solutions coincide if and only if the corresponding values of $\psi,\psi',\ldots,\psi^{(m-1)}$ at any element of the domain coincide. 
In particular, a base $\psi_1,\ldots,\psi_m$ of all solutions is uniquely determined by the condition that the derivatives up to order $m-1$ of $\psi_i$ vanish at the marked point  $t_0$ with the exception of the $(i-1)$-th derivative, which is equal to one at $t_0$. Since $Q$ commutes with $P$, the span of $\psi_1,\dots,\psi_m$ is invariant with respect to $Q$. Hence $Q$ acts as right multiplication with an $m\times m$-matrix $M(\lambda)$ on the row vector $(\psi_1,\dots,\psi_m)$. 

%
The vectors $\widehat\psi$ corresponding to the basis $\psi_1,\ldots,\psi_m$ build the fundamental solution $g(t,\lambda)$, i.e.  an $m\times m$-matrix-valued function depending on $(t,\lambda)\in I\times{\mathbb C}$:
$$(D-U(\,\cdot\,,\lambda))g(\,\cdot\,,\lambda)=0\text{ and } g(t_0,\lambda)=\unity,$$
where $g(t,\lambda)$ is invertible for all $(t,\lambda)\in I\times{\mathbb C}$. 
We conclude 
\begin{equation}
\label{eq:M-and-V}
V(t,\lambda)g(t,\lambda)=g(t,\lambda)M(\lambda) \, \Leftrightarrow \, M(\lambda)=g^{-1}(t,\lambda)V(t,\lambda)g(t,\lambda)
\end{equation} 
where $M(\lambda)$
does not depend on $t$, but it does depend on the choice of the marked point \,$t_0$\,. Since $g(t,\lambda)$ is a solution of the differential equation $(D-U(\,\cdot\,,\lambda))g(\,\cdot\,,\lambda)=0$ one has
\begin{multline*}
0=Dg^{-1}(\,\cdot\,,\lambda)V(\,\cdot\,,\lambda)g(\,\cdot\,,\lambda)=
-g^{-1}(\,\cdot\,,\lambda)U(\,\cdot\,,\lambda)g(\,\cdot\,,\lambda)+\\
+g^{-1}(\,\cdot\,,\lambda)\frac{\partial V(\,\cdot\,,\lambda)}{\partial t}g(\,\cdot\,,\lambda)+
g^{-1}(\,\cdot\,,\lambda)V(\,\cdot\,,\lambda)U(\,\cdot\,,\lambda)g(\,\cdot\,,\lambda)=\\
g^{-1}(\,\cdot\,,\lambda)\left[D-U(\,\cdot\,,\lambda),V(\,\cdot\,,\lambda)\right]g(\,\cdot\,,\lambda).
\end{multline*}
Therefore, the commutativity of the operators $P$ and $Q$ implies
$$\left[D-U(\,\cdot\,,\lambda),V(\,\cdot\,,\lambda)\right]=0.$$ 
Since the characteristic polynomial of $V(t,\lambda)$ is invariant under conjugation of $V(t,\lambda)$ with $g(t,\lambda)$, this yields
$$f(\lambda,\mu):=\det\left(\mu\unity-V(t,\lambda)\right)=
\det\left(\mu\unity-M(\lambda)\right). $$
So $f(\lambda,\mu)$ does not depend on $t$. Due to our construction, $U(t,\lambda)$, $V(t,\lambda)$ and $M(\lambda)=V(t_0,\lambda)$ are polynomials with respect to $\lambda$. 

Let us determine the highest coefficients of these polynomials. 
In order to regard $P$ as homogeneous of degree $m$ and $Q$ as homogeneous of degree $n$, we assign the weight $i$ to $\alpha_i$ for $i=0,\dots,m$, the weight $j$ to $\beta_j$ for $j=0,\dots,n$ and to every derivative the weight $1$. This assignment is in accordance with the weight \,$m$\, for $\lambda$ and the weight \,$n$\, for $\mu$ as stated in the theorem. 
The $k$-th row of $V(\,\cdot\,,\lambda)$ describes the action of $Q$ on $\psi^{(k)}$. So the entry $V_{kl}$ of the matrix $V(\,\cdot\,,\lambda)$
has the weight $n+k-l$ since $V_{kl}\widehat\psi_l$ contains at most $n+k$ derivatives.
The entries $V_{kl}$ are therefore homogeneous polynomials of degree $n+k-l$ with respect to $\lambda$ and derivatives of $\alpha_i$ and $\beta_j$, where $\deg(\alpha_i^{(r)})=i+r$, $\deg(\beta_j^{(s)})=j+s$ and $\deg(\lambda)=m$. 
So $\det(V(t,\lambda))$ has the degree $mn$. 
Moreover the $(k,l)$-th entry of $\mu\unity-V(t,\lambda)$ has the weight $n+k-l$, so the characteristic polynomial of $V(t,\lambda)$ is homogeneous of weight $mn$.
In particular the highest coefficients of \,$f(\lambda,\mu)$\, depend only on the coefficients of \,$P$\, and \,$Q$\, of weight zero, which are the highest coefficients. 

Therefore the highest coefficients of \,$f(\lambda,\mu)$\, for general \,$P$\, and \,$Q$\, are already obtained by considering the ``free case'' $P=D^m$ and $Q=\beta_0\,D^n$ with a constant \,$\beta_0 \neq 0$\,. In this case, the matrices are 
\begin{align}\label{eq:UV}
U\ind{free}(t,\lambda)&=\begin{pmatrix}
0 & 1 & 0 & \ldots & 0\\
0 & 0 & 1 & \ldots & 0\\
\vdots & \vdots & \vdots & \ldots & \vdots\\
\lambda & 0 & 0 & \ldots & 0
\end{pmatrix},&V\ind{free}(t,\lambda)&=M\ind{free}(\lambda)=\beta_0\cdot U\ind{free}^n(t,\lambda)\;.
\end{align}   
Hence $\det(U\ind{free}(t,\lambda))=(-1)^{m-1}\lambda$ and the highest coefficient of $f(\lambda,\mu)$ is equal to $\mu^m-\beta_0^m\,\lambda^n$.  

 Now we claim that the differential operator $f(P,Q)$ vanishes identically. Due to the commutativity of $P$ and $Q$, this differential operator does not depend on the order in which the operators are inserted into the polynomial $f(P,Q)$. If $\psi$ is any common solution of the equations $(P-\lambda)\psi=0$ and $(Q-\mu)\psi=0$ with $f(\lambda,\mu)=0$, then $P$ acts on $\psi$ as the multiplication with $\lambda$ and $Q$ acts on $\psi$ as the multiplication with $\mu$. Consequently, the action of $f(P,Q)$ on $\psi$ is the same as the action of $f(\lambda,\mu)$ on $\psi$ and hence vanishes. For any roots $(\lambda_1,\mu_1),\ldots,(\lambda_r,\mu_r)$ in $\C^2$ of $f$ with pairwise different $\lambda_1,\ldots,\lambda_r$, any choice of non-trivial common solutions $\psi_1,\ldots,\psi_r$ of $(P-\lambda_k)\psi_k=0$ and $(Q-\mu_k)\psi_k=0$ for $k=1,\dots,r$ are linear independent. In fact, suppose that these solutions obey a linear relation
 $$a_1\psi_1+\ldots+a_r\psi_r=0.$$
 Then the action of $P,P^2,\ldots,P^{r-1}$ on the relation adds $r-1$ other linear relations
 $$a_1\lambda_1^s\psi_1+\ldots+a_r\lambda_r^s\psi_r=0\text{ for }s=1,\ldots,r-1.$$
 Altogether, we have $r$ linear relations on the functions $a_1\psi_1,\ldots,a_r\psi_r$. Since $\lambda_1,\ldots,\lambda_r$ are pairwise different, the determinant of coefficients of these relations is a non-vanishing Vandermonde determinant. This implies that all these functions $a_k\psi_k$ vanish identically. A solution $\psi$ of $(P-\lambda)\psi=0$ vanishes identically on $\tilde{I}$ if the first $m$ derivatives of $\psi$ vanish at $t_0\in \tilde{I}$. Hence, the complements of the sets of roots of $\psi_k$ are open and dense and there exists $t_0\in \tilde{I}$ with non-vanishing values $\psi_k(t_0)$. Therefore $a_1,\ldots,a_r$ vanish, and thus the $\psi_k$ are indeed linear independent. We conclude that the differential operator $f(P,Q)$ has infinitely many solutions $f(P,Q)\psi=0$. Since the order of $f(P,Q)$ is bounded by $nm$, this implies that this differential operator vanishes identically.
\end{proof}
A partial converse has been shown by Burchnall and Chaundy in \cite{BC1} which includes pairs of differential operators $P,Q$ of co-prime orders, but not the general case. In the sequel, we shall associate a singular curve to a commutative algebra of differential operators. 

From now on, we want to investigate pairs of commuting operators $P$ and $Q$ whose orders $m$ and $n$ are co-prime. We use the results and notation of \cite{KLSS}.
In the proof of Theorem \ref{th:commuting operators}, we constructed a holomorphic matrix-valued function $M: \C \to \C^{m\times m}$. In \cite[Section 4]{KLSS} we have described how this matrix can be associated with a pair $(X',\Sss')$ where $X'$ is a complex curve and $\Sss'$ a generalized divisor.

In Theorem \ref{th:commuting operators} a pair of commuting differential operators is given.  For such a pair,
we define the singular curve $X'$ as the one-point compactification of 
\begin{equation}
\label{eq:eigenvaluecurve}
\bigr\{ \, (\lambda,\mu) \in \C\times \C \ |\ \det\bigr(\mu\cdot \unity-M(\lambda)\bigr)=0 \, \bigr\} \; . 
\end{equation}
\,$X'$\, does not depend on the choice of the marked point \,$t_0$\, because of Equation~\eqref{eq:M-and-V}. 
 
We claim that $X'$ is a singular curve and the point at infinity $\infty$ a smooth point.
Since $P$ and $Q$ are in standard form with highest coefficient equal to $1$, $M(\lambda)$ has the following form:
\begin{equation}
\label{eq:asymp-M}
M(\lambda)=\begin{pmatrix}
0 & 1 & 0 & \ldots & 0\\
0 & 0 & 1 & \ldots & 0\\
\vdots & \vdots & \vdots & \ldots & \vdots\\
\lambda & 0 & 0 & \ldots & 0
\end{pmatrix}^n+O(\lambda^{n-1}), \text{ as }\lambda\to\infty.
\end{equation}
Therefore, there exists a local parameter $z$ defined for large $|\lambda|$ such that $\lambda=(z/2\pi\ci)^{-m}$, $\mu=(z/2\pi\ci)^{-n}+O(z^{1-n})$ as $z\to 0$. If \,$m$\, and \,$n$\, are relatively prime, then \,$z$\, is uniquely characterised by these conditions.  
The single root of $z$, which is added at infinity to \eqref{eq:eigenvaluecurve}, is a smooth point $\infty$ of $X'$. The eigenvalue $\lambda$ corresponds to a meromorphic function on $X'$ with a single pole of order $m$ at $\infty$ and $\mu$ to a meromorphic function with a single pole of order $n$ at $\infty$.
In order to define the generalized divisor $\Sss'$ we normalize the eigenfunction of $M(\lambda)$ by $\ell(\psi)=1$ with $\ell: \C^m \to \C,\ (\psi_{1},\dots,\psi_{m})\mapsto \psi_{1}$. 
\begin{Lemma}
\label{L:normalize-bounded}
There exist only finitely many $(\lambda,\mu) \in X'\setminus \{\infty\}$ for which the kernel of $\ell$ contains non-trivial eigenvectors of $M(\lambda)$ with eigenvalue $\mu$. 
\end{Lemma}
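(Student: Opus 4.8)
The plan is to exhibit the exceptional set as the common zero locus on $X'$ of finitely many holomorphic functions, and then to show by an asymptotic analysis at the smooth point $\infty$ that these functions do not vanish identically. First I would make the condition on $\ell$ explicit. Write $\ker\ell=\{v\in\C^m : v_1=0\}=\mathrm{span}(e_2,\dots,e_m)$, and for a point $(\lambda,\mu)\in X'\setminus\{\infty\}$ let $A(\lambda,\mu)$ denote the $m\times(m-1)$ matrix consisting of the last $m-1$ columns of $\mu\unity-M(\lambda)$. A nontrivial $v\in\ker\ell$ with $(\mu\unity-M(\lambda))v=0$ exists if and only if $A(\lambda,\mu)$ has a nontrivial kernel, i.e.\ if and only if all $(m-1)\times(m-1)$ minors of $A(\lambda,\mu)$ vanish. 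These minors are polynomials in $\lambda$ and $\mu$, so the exceptional set $B$ of the Lemma is the common zero set in $X'\setminus\{\infty\}$ of finitely many holomorphic functions and is in particular a closed analytic subset. As a guide to intuition, under the correspondence between eigenvectors $v$ of $M(\lambda)$ and common solutions $\psi=\sum_j v_j\psi_j$ of $(P-\lambda)\psi=0$ and $(Q-\mu)\psi=0$, the normalisation $\psi_j^{(k)}(t_0)=\delta_{k,j-1}$ gives $\ell(v)=v_1=\psi(t_0)$, so $B$ consists of those $(\lambda,\mu)$ whose common eigenfunction vanishes at the marked point.

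Next I would show that $B$ stays away from $\infty$. Following a single sheet of $X'$ over a punctured neighbourhood of $\infty$ by the local parameter $z$ with $\lambda=(z/2\pi\ci)^{-m}$, set $s=(z/2\pi\ci)^{-1}$, so that $s^m=\lambda$, and conjugate by $T=\mathrm{diag}(1,s,\dots,s^{m-1})$. Using the homogeneity of the entries of $M(\lambda)=V(t_0,\lambda)$ established in the proof of Theorem~\ref{th:commuting operators} (the $(k,l)$-entry is homogeneous of weight $n+k-l$, with $\lambda$ of weight $m$), one checks that $s^{-n}T^{-1}M(\lambda)T$ converges as $z\to 0$ to $C^n$, where $C$ is the cyclic matrix with ones on the superdiagonal and a one in the lower-left corner; indeed $T^{-1}N T=sC$ for the companion matrix $N$ whose $n$-th power is the leading term of \eqref{eq:asymp-M}, and the remaining terms are of lower weight and so contribute $O(\lambda^{-1})$ after this balancing. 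Because $m$ and $n$ are coprime, the eigenvalues $\omega^n$ of $C^n$ (with $\omega$ an $m$-th root of unity) are pairwise distinct, hence simple, so the eigenvectors of $s^{-n}T^{-1}M(\lambda)T$ depend holomorphically on $z$ and converge to the eigenvectors $(1,\omega,\dots,\omega^{m-1})^T$ of $C^n$. Transforming back by $T$, every eigenvector of $M(\lambda)$ near $\infty$ has the form $(1,\kappa,\dots,\kappa^{m-1})^T+o(1)$ with $\kappa=s\omega$ an $m$-th root of $\lambda$; in particular its first component tends to $1$, so $\ell$ does not annihilate it and $B$ contains no point in a punctured neighbourhood of $\infty$.

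I expect this middle step to be the main obstacle. The delicate point is that the correction terms in \eqref{eq:asymp-M} are \emph{not} uniformly small before rescaling, and one must use the weight/homogeneity structure to verify that after the balancing conjugation by $T$ they really become $O(\lambda^{-1})$ uniformly in all entries; the coprimality of $m$ and $n$ enters precisely to force the limiting eigenvalues $\omega^n$ to be simple, which is what makes the eigenvectors vary continuously and keeps their first component near $1$.

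Finally I would conclude globally. Every irreducible component of the affine curve \eqref{eq:eigenvaluecurve} is noncompact, so its closure in the one-point compactification $X'$ must contain the unique point at infinity $\infty$; since $\infty$ is a smooth point of $X'$ it lies on a single component, whence $X'$ is irreducible. By the previous step $B$ avoids a neighbourhood of $\infty$, so $B$ is a closed analytic subset of the compact irreducible curve $X'$ that is not all of $X'$, and such a subset is finite. Equivalently, were $B$ infinite it would accumulate at some point of the compact curve $X'$, forcing all the minors above to vanish identically on $X'$ and hence near $\infty$, contradicting the nonvanishing of the first component established there. This proves the Lemma.
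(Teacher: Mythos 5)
Your proof is correct and follows essentially the same route as the paper: the balancing conjugation by $\mathrm{diag}(1,\lambda^{1/m},\dots,\lambda^{(m-1)/m})$, the weight count showing that the non-free part of $M(\lambda)=V(t_0,\lambda)$ is a relative $O(\lambda^{-1/m})$ after rescaling (your ``$O(\lambda^{-1})$'' should read $O(s^{-1})=O(\lambda^{-1/m})$, which is all that is needed), and the coprimality of $m$ and $n$ forcing the limiting matrix $C^n$ to have simple eigenvalues with eigenvectors $(1,\omega,\dots,\omega^{m-1})^T$ of nonvanishing first component. Your supplementary steps --- realising the exceptional set as the common zero locus of the $(m-1)\times(m-1)$ minors of the last $m-1$ columns of $\mu\unity-M(\lambda)$, and deducing irreducibility of $X'$ from the fact that every noncompact affine component must pass through the unique smooth point $\infty$ --- correctly make explicit the finiteness conclusion that the paper compresses into ``a subvariety of codimension at least $1$, and hence finite''.
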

\begin{proof}
We consider $V(t,\lambda)$ as defined in the proof of Theorem \ref{th:commuting operators} and $V\ind{free}(t,\lambda)$ as defined in equation \eqref{eq:UV} which corresponds to the free case $P=D^m$ and $Q=D^n$. The normalized free eigenvector $\varphi$ obeys
\begin{equation}\label{eq:free_V}
V\ind{free}(t,\lambda)\varphi=\lambda^{n/m}\varphi \mbox{ with }
\varphi=\left(1,\lambda^{1/m},\dots, \lambda^{(m-1)/m} \right)^T.
\end{equation} 
Due to the weights of $V_{kl}$, as introduced in the proof of Theorem \ref{th:commuting operators}, $V_{kl}$ is a polynomial of degree $n+k-l$. Therefore, all contributions to $V_{kl}$ which do not contribute to $V_{\mathrm{free},kl}$ include at least one of the coefficients $\alpha_i$ for $i\leq m-2$ or $\beta_j$ for $j\leq n-1$, or a derivative of such a coefficient. This implies
\[
|(V_{kl}-V_{\mathrm{free},kl})|=\mathrm{O}\left(\lambda^{-1/m}\right)\lambda^{(n+k-l)/m}.
\] 
Let $T$ be the diagonal matrix $\mathrm{diag}\left(1,\lambda^{-1/m},\dots, \lambda^{-(m-1)/m}\right)$. Therefore, 
\[
|(T(V-V\ind{free})T^{-1})_{kl}|=|(V-V\ind{free})_{kl} \lambda^{(l-k)/m}|=|\lambda|^{n/m}O(
\lambda^{-1/m}),
\]
Due to equation \eqref{eq:UV}
\[
\lambda^{-n/m}\cdot TV\ind{free}T^{-1}= \begin{pmatrix}
0 & 1 & 0 & \ldots & 0\\
0 & 0 & 1 & \ldots & 0\\
\vdots & \vdots & \vdots & \ldots & \vdots\\
1 & 0 & 0 & \ldots & 0
\end{pmatrix}^n
\] 
and hence
\[
|\lambda|^{-n/m}\cdot\|TVT^{-1}- TV\ind{free}T^{-1}\| \leq O(\lambda^{-1/m}).
\]
Since $TV\ind{free}T^{-1}$ has $m$ pairwise different eigenvalues, it is diagonalisable. Now we show that the distance $\|T\psi-T\phi\|$ of the eigenfunctions is also of order $O(\lambda^{-1/m})$. Due to the implicit function theorem applied to $(TVT^{-1}-\mu\unity)T\varphi$, the normalized eigenfunction $T\varphi$ and the eigenvalue $\mu$ depend nearby $TV\ind{free}T^{-1}$ continuously differentiably on the entries of $TVT^{-1}$.
Since $\ell \circ T=\ell$ and $\varphi\not\in\ker(\ell)$, for sufficiently large $\lambda$ also $\psi\not\in \ker (\ell)$. This shows that the set of \,$(\lambda,\mu) \in X' \setminus \{\infty\}$\, so that \,$\ker(\ell)$\, contains a non-trivial eigenvector of \,$M(\lambda)$\, for the eigenvalue \,$\mu$\, is a subvariety of \,$X'\setminus\{\infty\}$\, of codimension at least \,$1$\,, and hence finite.
\end{proof}

By evaluating \,$\psi$\, at the marked point \,$t=t_0$\,, we obtain 
the global meromorphic function \,$\chi := \psi(\,\cdot\,,t_0) = (\chi_1,\ldots,\chi_m)^T: X'\setminus\{\infty\} \to \C^m$. \,$\chi$\, is characterised uniquely by 
$$ M \chi = \mu \chi \quad \text{and} \quad 
   \ell(\chi) = 1,
$$ 
where we regard also $M$ and $\mu$ as functions on $X'\setminus\{\infty\}$. Locally, $\chi$ can be obtained from any holomorphic eigenfunction $\tilde{\chi}$ by taking $\chi=\tilde\chi/\tilde\chi_1$. 
  
In the sequel we use generalised divisors on the spectral curve \,$X'$\,. For this purpose we again apply the notations introduced in \cite{KLSS}. 
We define  
the generalised divisor $\Sss'$ corresponding to \,$\chi$\, on $X'\setminus \{\infty\}$ as the subsheaf of the sheaf 
of meromorphic functions on $X'\setminus \{\infty\}$ which is generated over $\Oss_{X'\setminus \{\infty\}}$ by $\chi_1,\dotsc,\chi_m$. Because of Lemma~\ref{L:normalize-bounded}, $\Sss'$ is equal to $\Oss_{X'}$ on a punctured neighborhood of $\infty$. We therefore extend $\Sss'$ to $\infty$ by defining $\Sss'_{\infty}=\Oss_{X',\infty}$.  

In Section~\ref{Se:inverse} we will describe the dependence of \,$\mathcal{S}'$\, on the marked point \,$t_0$\, by means of the Krichever construction. 

Note that the eigenspace of $M(\lambda)$ with eigenvalue $\mu$ is one-dimensional at all points of $X'$ where the map $X'\to \mathbb{P}^1,\, (\lambda,\mu)\mapsto \lambda$ is not ramified. If the eigenspaces of $M$ define a line bundle on $X'$, then $\Sss'$ describes the dual eigenline bundle in the sense of the correspondence between divisors and line bundles, see \cite[\S 29]{Fo}. The above considerations can be summarized in the assignment of a pair $(X',\Sss')$ to the pair $(P,Q)$ of commuting differential operators of co-prime orders.

In \cite[Definition 4.2]{KLSS} we have introduced the \emph{\,$\Sss'$-halfway normalisation} $X(\Sss')$ for the pair $(X',\Sss')$: For a generalised divisor $\Sss'$ on a singular curve $X'$, the \,$\Sss'$-halfway normalisation of \,$X'$\, is the unique one-sheeted covering $\pi_{X(\Sss')}: X(\Sss') \to X'$ such that 
\[ (\pi_{X(\Sss')})_\ast \Oss_{X(\Sss')} = \{ f \in \Bar{\Oss}_{X'} \,|\, f\cdot g \in \Sss' \text{ for all $g\in \Sss'$} \} \; . \]
On $X(\Sss')$, there exists a unique generalized divisor $\Sss(\Sss')$ whose direct image with respect to $\pi_{X(\Sss')}$ equals $\Sss'$. For any $q\in X'$, we choose local generators $\phi_1,\dots,\phi_m$ of $\Sss'_q$. Then for any $q'\in \pi_{X(\Sss')}^{-1}[\{q\}]$,  $\Sss(\Sss')_{q'}$ is the $X(\Sss')$-submodule of $\Mss_{q'}$ generated by $\phi_1\circ\pi_{X(\Sss)}, \dots, \phi_m\circ \pi_{X(\Sss)}$, see \cite{KLSS}.


Until now, we have assigned the quadruple $(X',\Sss',\infty,z)$ to commuting differential operators $P$ and $Q$ of coprime orders $m$ and $n$. Here, $P$ and $Q$ correspond to meromorphic functions $\lambda$ and $\mu$ on $X'$ with poles of orders $m$ and $n$ only at $\infty$. The meromorphic functions on $X'$ having only poles at $\infty$ are equal to the algebra $\C[\lambda,\mu]/(f)$ with $f$ defined in Theorem \ref{th:commuting operators}. In the sequel, we will assign such quadruples $(X',\Sss',\infty,z)$ to commutative algebras. 
This assignment has the property that the commutative algebra is isomorphic to the algebra of meromorphic functions on $X'$ which have poles only at $\infty$. The quadruple $(X',\Sss',\infty,z)$ constructed above is assigned to the commutative algebra generated by the two differential operators $P$ and $Q$. 
Our main result gives an essentially $1-1$ correspondence between the commutative algebras in the following class and quadruples $(X',\Sss',\infty,z)$ of a compact singular curve $X'$ with smooth marked point $\infty$ and coordinate \,$z$\, near \,$\infty$\, and a generalized divisor $\Sss'$ on $X'$ whose degree is equal to the arithmetic genus of $X'$. In this section, we investigate the map from the algebra to the triple and in the following section the inverse of this map.

We will see that the following definition describes the maximal commutative subalgebras of $\mathcal{A}(I)$ which are of rank $1$. 
\begin{Definition}[centraliser]
For each subalgebra $A$ of $\mathcal{A}(I)$ the algebra
\begin{equation*}
C(A):=\{P\in \mathcal{A}(I)\ \mid \ \forall\, Q\in A \,: \, [P,Q]=0\}
\end{equation*}
is called the \emph{centraliser} of $A$.
\end{Definition}

\begin{Lemma}\label{lem:CA}
For each $A\in\mathcal{R}$, we have $C(A)\in\mathcal{R}$, and $A$ has finite codimension in $C(A)$.
\end{Lemma}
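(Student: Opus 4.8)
The plan is to prove the two assertions separately: first that $C(A)$ is commutative, which together with $A\subseteq C(A)$ immediately yields $C(A)\in\Rss$, and then that $\dim_{\mathbb{C}}(C(A)/A)<\infty$. Note at the outset that $C(A)$ is a subalgebra of $\Ass(I)$ containing the unit, and that $A\subseteq C(A)$ because $A$ is commutative; hence $C(A)$ already contains the pair of coprime-order operators witnessing $A\in\Rss$. Thus the only substantive point in ``$C(A)\in\Rss$'' is commutativity.

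For commutativity I would reuse the matrix representation from the proof of Theorem~\ref{th:commuting operators}. Fix $P\in A$ of positive order $m$ in standard form. Any $S\in C(A)$ satisfies $[S,P]=0$, so $S$ maps each solution space $\kernel(P-\lambda)$ into itself and therefore acts on the distinguished basis $\psi_1,\dots,\psi_m$ as right multiplication by a matrix $M_S(\lambda)$, depending polynomially on $\lambda$, exactly as $Q$ did via $M(\lambda)=M_Q(\lambda)$. This assignment is an algebra homomorphism, $M_{ST}=M_SM_T$, and it is injective: if $M_S\equiv 0$ then $S$ annihilates every solution of $(P-\lambda)\psi=0$ for every $\lambda$, and since solutions attached to distinct eigenvalues are linearly independent (the Vandermonde argument already given in the proof of Theorem~\ref{th:commuting operators}), $S$ would have infinitely many linearly independent functions in its kernel and hence vanish. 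Now the rank~$1$ hypothesis enters: away from the finitely many ramification values of $X'\to\mathbb{P}^1$, the matrix $M(\lambda)$ has $m$ distinct eigenvalues and is therefore diagonalisable with one-dimensional eigenspaces, so its commutant is a commutative algebra of simultaneously diagonalisable matrices. Since every $M_S(\lambda)$ with $S\in C(A)$ commutes with $M(\lambda)$ (because $S$ commutes with $Q$), for any $S,T\in C(A)$ the matrices $M_S(\lambda),M_T(\lambda)$ lie in this commutant, so $[M_S,M_T](\lambda)=0$ on a dense subset of $\C$, hence identically. By the homomorphism property $M_{[S,T]}=[M_S,M_T]=0$, and by injectivity $[S,T]=0$. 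Thus $C(A)$ is commutative and $C(A)\in\Rss$.

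For the finite codimension I would argue by the degree filtration. Every element of $C(A)$ commutes with $P$, so by the computation \eqref{eq:highest coefficient} its highest coefficient is constant, just as for elements of $A$. Write $N_A=\{\deg R\mid R\in A\setminus\{0\}\}$ and $N_C=\{\deg R\mid R\in C(A)\setminus\{0\}\}$; then $N_A\subseteq N_C\subseteq\mathbb{N}_0$, and by Lemma~\ref{L:d0} the complement $\mathbb{N}_0\setminus N_A$ is finite, whence $N_C\setminus N_A$ is finite. For each $d\in N_C\setminus N_A$ choose $R_d\in C(A)$ of degree $d$. I claim $\{R_d\}_{d\in N_C\setminus N_A}$ is a basis of $C(A)/A$. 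Spanning follows by descending induction on the degree: given $S\in C(A)$ of degree $d$, if $d\in N_A$ we subtract a suitable scalar multiple of an element of $A$ of the same degree (possible since leading coefficients are constant) to strictly lower the degree, while if $d\in N_C\setminus N_A$ we subtract a scalar multiple of $R_d$ instead; the process terminates at degree $0$. Linear independence modulo $A$ holds because a nonzero combination $\sum_d c_dR_d$ has a well-defined top degree $d_0\in N_C\setminus N_A$ (no cancellation of leading terms, the $R_d$ having distinct degrees and constant leading coefficients), and $d_0\notin N_A$ shows this combination cannot lie in $A$. Hence $\dim_{\mathbb{C}}(C(A)/A)=\#(N_C\setminus N_A)\le\#(\mathbb{N}_0\setminus N_A)<\infty$.

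I expect the main obstacle to be the commutativity step, and within it the clean verification that the rank~$1$ condition forces the generic simultaneous diagonalisability of the commuting family $\{M_S(\lambda)\}_{S\in C(A)}$, together with the injectivity of $S\mapsto M_S$; the degree count is then routine. As an alternative to this spectral argument, commutativity of $C(A)$ also follows from the classical theorem of Schur that the centraliser $C(P)$ of a single operator $P$ of positive order is commutative (via the formal $m$-th root $P^{1/m}$ in the ring of pseudodifferential operators, whose centraliser consists of constant-coefficient series in $P^{1/m}$), since $C(A)\subseteq C(P)$.
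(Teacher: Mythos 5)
Your proposal is correct and follows essentially the same route as the paper: the codimension bound comes from the degree count of Lemma~\ref{L:d0} (constant leading coefficients), and commutativity of $C(A)$ is proved by representing its elements as matrices on $\kernel(P-\lambda)$ commuting with $M(\lambda)$, using that coprimality of the orders forces $M(\lambda)$ to have $m$ distinct eigenvalues for generic $\lambda$ (the paper derives this explicitly from the leading term $\mu^m+c\lambda^n$ of $f$), so that the commutant is simultaneously diagonalisable and the commutator has infinite-dimensional kernel. Your injectivity-of-$S\mapsto M_S$ phrasing is just a repackaging of the paper's Vandermonde argument, so there is no substantive difference.
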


\begin{proof}
Because we consider only differential algebras where the highest order coefficients of the member operators are constant, Lemma~\ref{L:d0} implies $\dim(B/A)\leq d_0 <\infty$ for any \,$B\in\mathcal{R}$\, with \,$B \supset A$\,.
Therefore it suffices to prove that $C(A)$ is commutative because
$C(A)$ contains $A$. 

Let $A\in\mathcal{R}$ and $P,Q\in A$ be two differential operators of coprime orders $m$ and $n$. For $\lambda \in \C$, let $V_\lambda$ be the $m$-dimensional space of solutions of $P\psi=\lambda\psi$. As in the proof of Theorem \ref{th:commuting operators}, let $M(\lambda)$ denote the endomorphism $V_\lambda\to V_\lambda$ induced by $Q$. Due to Theorem \ref{th:commuting operators}, $f(\lambda,\mu)$ has weighted degree $mn$ with highest term $\mu^m+c\lambda^n$ with $c\neq 0$. For coprime $m$ and $n$, the $m$-th roots of $\lambda^n$ are all pairwise different. Hence, for large $\lambda$ the $m$ solutions of $f(\lambda,\mu)=0$ are also pairwise different and
$M(\lambda)$ has $m$ pairwise different eigenvalues. Because the discriminant is holomorphic, the same holds for $\lambda$ in an open and dense subset of $\C$.

Now, let $R,S \in C(A)$. Since they commute with $P$ and $Q$, they define endomorphisms $B(\lambda)$ and $C(\lambda)$ of $V_\lambda$ commuting with $M(\lambda)$.
The commutator $[R,S]\in C(A)$ induces the endomorphism $[B(\lambda), C(\lambda)]$ of $V_\lambda$.
If $M(\lambda)$ has pairwise different eigenvalues, it is diagonal with respect to an appropriate basis. Since $B(\lambda)$ and $C(\lambda)$ commute with $M(\lambda)$, this basis also diagonalises $B(\lambda)$ and $C(\lambda)$. Therefore, $B(\lambda)$ and $C(\lambda)$ commute.
This implies that the vector spaces $V_\lambda$ belong to the kernel of $[R,S]$ if $M(\lambda)$ has pairwise different eigenvalues. By definition, $V_\lambda\cap V_{\lambda'}=\{0\}$ for $\lambda\neq \lambda'$. As in the proof of Theorem \ref{th:commuting operators}, $[R,S]$ has an infinite dimensional kernel.
Since $[R,S]$ has finite order, its kernel can only be infinite dimensional if $[R,S]=0$.
\end{proof}

\begin{Definition}
\emph{Spectral data} are a quadruple \,$(X',\mathcal{S}',\infty,z)$\,, where \,$X'$\, is a compact singular curve, \,$\mathcal{S}'$\, is a generalised divisor on \,$X'$\, whose degree is equal to the arithmetic genus of \,$X'$\,, \,$\infty$\, is a smooth point of \,$X'$\, and \,$z$\, is a local coordinate of \,$X'$\, near \,$\infty$\,. 
\end{Definition}

We show in the following theorem that $C(A)$ has spectral data \linebreak
$(X(\Sss'), \Sss(\Sss'), \infty,z)$.
This will lay the foundation to construct the spectral data assigned to general algebras $A\in\Rss$. In the sequel, \,$\mathcal{M}$\, denotes the sheaf of meromorphic functions on \,$X(\mathcal{S}')$\,. We omit the subscript \,${}_{X(\mathcal{S}')}$\, because \,$(\pi_{X(\mathcal{S}')})_*$\, is an isomorphism of sheaves from \,$\mathcal{M}_{X(\mathcal{S}')}$\, onto \,$\mathcal{M}_{X'}$\,. 

\begin{Theorem}\label{th:direct center}
	Let $A\in\mathcal{R}$ and $P,Q\in C(A)$ two differential operators of coprime orders and $(X',\Sss',\infty,z)$ be the spectral data corresponding to the subalgebra of $C(A)$ generated by $P$ and $Q$. Then the triple $(X(\Sss'),\Sss(\Sss'),\infty)$ and the value \,$\mathrm{d}z(\infty)$\, are independent of the choice of $P$ and $Q$, $C(A)$ is isomorphic to the algebra
\begin{align}\label{eq:B}
B&:=\{f\in H^0(X(\Sss'),\mathcal{M})\,\mid\,\forall\, p\in X(\Sss')\setminus\{\infty\}:\,f_p\in\mathcal{O}_{X(\Sss'),p}\}
\end{align}
and $(\pi_{X(\Sss')})_\ast\Sss(\Sss')=\Sss'$.
\end{Theorem}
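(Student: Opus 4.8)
The plan is to construct an explicit algebra isomorphism $\Phi\colon C(A)\to B$ and to read off the remaining assertions from it. I would begin by reducing to the subalgebra $A_0:=\langle P,Q\rangle$ generated by the chosen pair. Since $A_0\subseteq C(A)$ and $C(A)$ is commutative (Lemma~\ref{lem:CA}), every $R\in C(A)$ commutes with $P,Q$, so $C(A)\subseteq C(A_0)$. Conversely $A_0\in\Rss$, hence $C(A_0)$ is commutative by Lemma~\ref{lem:CA}; as $A\subseteq C(A)\subseteq C(A_0)$ and $C(A_0)$ commutes with itself, it commutes with all of $A$, giving $C(A_0)\subseteq C(A)$. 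Thus $C(A)=C(A_0)$, and it suffices to describe the centraliser of the pair $(P,Q)$ whose spectral data $(X',\Sss',\infty,z)$ were built in Section~\ref{Se:direct}.

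Next I would define $\Phi$ on $R\in C(A)=C(A_0)$. Because $R$ commutes with $P$ it preserves each solution space $V_\lambda=\ker(P-\lambda)$, and because it commutes with $Q$ the induced endomorphism commutes with $M(\lambda)$; on the dense set where $M(\lambda)$ has simple spectrum, $R$ therefore acts on the common eigenfunction $\psi$ as a scalar $\rho_R(\lambda,\mu)$, i.e.\ $R\psi=\rho_R\,\psi$. As in the proof of Theorem~\ref{th:commuting operators}, $R$ is represented on $\widehat\psi$ by a matrix $V_R(t,\lambda)$ whose entries are polynomial in $\lambda$ and in the coefficients of $R$; evaluating $\widehat{R\psi}=V_R\widehat\psi$ at $t_0$ yields $\rho_R\,\chi_i=\big(V_R(t_0,\lambda)\chi\big)_i\in\Sss'$ for every $i$, since $\Sss'$ is generated over $\Oss_{X'}$ by $\chi_1,\dots,\chi_m$ and the entries of $V_R(t_0,\lambda)$ are regular in $\lambda$ on $X'\setminus\{\infty\}$. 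Taking the index with $\chi_1=1$ shows $\rho_R\in\Sss'$, and the same inclusions give $\rho_R\,\Sss'\subseteq\Sss'$. By the defining property of the halfway normalisation this means $\rho_R\in(\pi_{X(\Sss')})_\ast\Oss_{X(\Sss')}$ away from $\infty$; as $R$ has finite order, $\rho_R$ has a pole only at $\infty$, so $\rho_R\in B$. The map $\Phi\colon R\mapsto\rho_R$ is an algebra homomorphism because eigenvalues multiply, and it is injective because a nonzero $R$ with $\rho_R\equiv 0$ would annihilate the infinite-dimensional span of common eigenfunctions, contradicting $\ord(R)<\infty$ exactly as in Theorem~\ref{th:commuting operators}.

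The step I expect to be the main obstacle is surjectivity of $\Phi$: given $f\in B$ I must produce $R_f\in C(A)$ with $R_f\psi=f\psi$. Here I would invoke the Baker--Akhiezer/Krichever correspondence of \cite{KLSS}: a meromorphic function with a pole of order $k$ only at $\infty$ acts on the Baker--Akhiezer function $\psi(t,x)$ by multiplication, and this action is realised by a unique ordinary differential operator $R_f$ of order $k$ in $t$, obtained by matching the asymptotic expansions of $f\psi$ and of the $D^j\psi$ at $\infty$ in the coordinate $z$ with $\lambda=(z/2\pi\ci)^{-m}$. Since multiplication by $f$ preserves the eigenvalues of $P$ and $Q$, the operator $R_f$ commutes with $P$ and $Q$, hence lies in $C(A_0)=C(A)$, and $\Phi(R_f)=f$. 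To see that this matching terminates with a genuine finite-order operator and that the image is all of $B$, I would keep track of orders: $\Phi$ carries $\ord(R)$ to the pole order of $\rho_R$ at $\infty$, and by Lemmas~\ref{L:d0} and \ref{L:deg} together with Riemann--Roch on the singular curve $X(\Sss')$ — using the normalisation $\deg\Sss'=p_a(X')$ built into the definition of spectral data — the pole orders available in $B$ for large degree fill the same numerical semigroup as the orders occurring in $C(A)$, so the graded pieces have equal dimensions and $\Phi$ is onto.

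Finally, $(\pi_{X(\Sss')})_\ast\Sss(\Sss')=\Sss'$ is precisely the defining property of $\Sss(\Sss')$ recalled from \cite[Definition~4.2]{KLSS}, so nothing further is needed for that clause. For the independence statements I would argue intrinsically: by $C(A)=C(A_0)$ the algebra $C(A)$ is independent of the chosen pair, and the isomorphism $C(A)\cong B$ recovers $X(\Sss')\setminus\{\infty\}$ as the locus on which the elements of $C(A)$ are regular, $\infty$ as the unique point carrying all poles, and the filtration of $C(A)$ by order as the pole-order filtration at $\infty$; this determines $(X(\Sss'),\Sss(\Sss'),\infty)$ up to canonical isomorphism. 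The value $\mathrm{d}z(\infty)$ is pinned down by the leading behaviour $\psi\sim e^{2\pi\ci\,t/z}$ of the common eigenfunction, which ties $z$ at $\infty$ to the fixed standard-form parameter $t$; since all admissible $(P,Q)$ are taken in standard form with respect to the same $t$, this leading normalisation, and hence $\mathrm{d}z(\infty)$, is the same for every choice.
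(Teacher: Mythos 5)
Your reduction to $C(A)=C(\langle P,Q\rangle)$ and your construction of the injection $R\mapsto\rho_R$ follow the paper's proof closely; in fact your justification that $\rho_R$ lands in $B$ (via $\rho_R\cdot\Sss'\subseteq\Sss'$ and the defining property of the halfway normalisation) is more explicit than the paper's, which only remarks that the entries of $N(\lambda)$ are bounded. The independence statements are also handled in essentially the paper's spirit, though the paper is more concrete: it compares two pairs $(P,Q)$, $(P',Q')$ by passing to a common subalgebra via Lemma~\ref{L:d0}, realising one spectral curve as a one-sheeted cover of the other, and observing that pullback of common eigenfunctions identifies the divisors; your appeal to recovering $(X(\Sss'),\Sss(\Sss'),\infty)$ ``intrinsically'' from the algebra isomorphism does not by itself account for the divisor, which is not determined by the abstract algebra.

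The genuine gap is surjectivity of $\Phi$. Your dimension count does not close it: injectivity and Riemann--Roch give $\dim C(A)_{\le d}\le\dim B_{\le d}=d+1-g(X(\Sss'))$ for large $d$, while the only lower bound available a priori comes from $\langle P,Q\rangle$ and reads $\dim C(A)_{\le d}\ge d+1-g(X')$; since $g(X(\Sss'))\le g(X')$ the inequalities point the wrong way, and the assertion that the orders occurring in $C(A)$ ``fill the same numerical semigroup'' as the pole orders in $B$ is exactly what has to be proved, not a consequence of Riemann--Roch. Your fallback, the Baker--Akhiezer route, is viable but presupposes that the family $t\mapsto g(t,\lambda)\chi$ of normalised eigenfunctions \emph{is} the Baker--Akhiezer function of $(X',\Sss',\infty,z)$ (including the asymptotic normalisation $e^{-2\pi\ci t/z}\psi\to 1$ at $\infty$ and the nonvanishing of $H^0$ needed for uniqueness); in the paper this identification is only established in the proof of Theorem~\ref{T:inverseproblemsolution}(1), and inside the present proof it would have to be carried out, together with the check that the resulting operator $R_f$ has coefficients defined on all of $I$. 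The paper instead closes surjectivity by citing from \cite{KLSS} the identification of $(\hat\lambda\circ\pi_{X(\Sss')})_\ast\Oss_{X(\Sss')}$ with the sheaf of holomorphic matrices commuting with $M(\lambda)$, which produces for each $f\in B$ a polynomial matrix $N(\lambda)$ in the commutant of $M(\lambda)$ and hence an element of $C(A)$. Either supply that sheaf-theoretic input or carry out the Baker--Akhiezer identification in full; as written, the surjectivity step is not established.
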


We will see in Section~\ref{Se:inverse} that the solution of the inverse problem for given spectral data \,$(X',\mathcal{S}',\infty,z)$\, depends on \,$z$\, only in terms of the value of \,$\mathrm{d}z(\infty)$\,.

\begin{proof}
Let $P,Q\in A$ be the differential operators of coprime orders $m$ and $n$ with the corresponding matrix $M(\lambda)$. The subalgebra \,$\langle P,Q\rangle$\, of \,$\mathcal{A}(I)$\, generated by \,$P$\, and \,$Q$\, is commutative, hence \,$\mathcal{R} \ni \langle P,Q \rangle \subset A$\, and therefore \,$A \subset C(A) \subset C(\langle P,Q\rangle)$\,. By Lemma~\ref{lem:CA}, \,$C(\langle P,Q \rangle)$\, is commutative and therefore contained in \,$C(A)$\,. This implies that \,$C(\langle P,Q \rangle)=C(A)$\,.

We now show that $C(A)$ is isomorphic to the algebra $B$ in \eqref{eq:B}. Let $R\in C(A)$. Since $[R,P]=0$, there exists for each $\lambda\in\C$ a matrix $N(\lambda)\in\C^{m\times m}$ which describes the action of $R$ on the kernel of $P-\lambda\cdot\unity$.
Since $[R,Q]=0$, we have $[M(\lambda),N(\lambda)]=0$ for all $\lambda\in\C$. Therefore, for each $(\lambda,\mu)\in X', N(\lambda)$ acts on the kernel of $M(\lambda)-\mu\cdot\unity$. For those $(\lambda,\mu)$ with one-dimensional $\ker(M(\lambda)-\mu\cdot\unity), N(\lambda)$ acts as multiplication with a complex number $\nu$. Such $(\lambda,\mu)$ build an open and dense subset of $X'$. Since the entries of $N(\lambda)$ are meromorphic, $\nu$ extends to a meromorphic function on $X'$. For all $\lambda\in\C$, all entries of $N(\lambda)$ are bounded and therefore also the eigenvalue $\nu$ of the $N(\lambda)$. This implies $\nu\in B$.

Conversely, in \cite{KLSS}, it has been proven that $(\hat\lambda\circ\pi_{X(\Sss')})_\ast\mathcal{O}_{X(\Sss')}$ is isomorphic to the sheaf of holomorphic $n\times n$ matrices on $\mathbb P^1$ which commute with $M(\lambda)$. Here, $\hat\lambda$ is the map $\hat\lambda:X'\to\C$ such that $(\lambda,\mu)\mapsto\lambda$.

This algebra isomorphism extends to an isomorphism of $B$ with matrices $N(\lambda)$ whose entries are polynomials with respect to $\lambda$ and commute with $M(\lambda)$. These are the matrices which describe the action of the elements of $C(A)$ on the kernel of $P-\lambda\cdot\unity$. This correspondence is $1$-to-$1$.

Let $P',Q'\in C(A)$ be another pair of differential operators of coprime orders. Since \,$\langle P,Q \rangle \cap \langle P',Q' \rangle$\, contains all \,$R \in C(A)$\, of sufficiently large order by Lemma~\ref{L:d0}, there exists a third pair \,$P'',Q'' \in A$\, such that \,$\langle P'',Q'' \rangle \subset \langle P,Q \rangle \cap \langle P',Q' \rangle$\,. Without loss of generality, we may therefore suppose \,$P',Q' \in \langle P,Q \rangle$\,, meaning that \,$P'$\, and \,$Q'$\, can be regarded as polynomials in \,$P$\, and \,$Q$\,. 

The spectral data of $P',Q'$ is a quadruple $(X'',\Sss'',\infty',z')$ together with two meromorphic eigenfunctions $\lambda',\mu'$ on \,$X''$\,. \,$\lambda'$\, and \,$\mu'$\, can be regarded as polynomials in \,$\lambda$\, and \,$\mu$\,, and in this way we obtain a holomorphic map \,$X' \setminus \{\infty\} \to X'' \setminus \{\infty'\}$\,. Because \,$\infty$\, and \,$\infty'$\, are smooth points of \,$X'$\, and \,$X''$\,, respectively, this map extends to a holomorphic map \,$X' \to X''$\,, which is biholomorphic on an open and dense subset of \,$X'$\, by Lemma~\ref{L:normalize-bounded}. Therefore this map is a one-fold covering. The pullback of a common eigenfunction of \,$P'$\, and \,$Q'$\, is a common eigenfunction of \,$P$\, and \,$Q$\,, or equivalently, the direct image of \,$\mathcal{S}'$\, is \,$\mathcal{S}''$\,. By definition of \,$X(\mathcal{S}')$\,, it follows that \,$(X(\mathcal{S}''),\mathcal{S}(\mathcal{S}''),\infty)$\, is isomorphic to \,$(X(\mathcal{S}'),\mathcal{S}(\mathcal{S}'),\infty')$\,. Because \,$P$\, and \,$P'$\, have highest coefficient \,$1$\,, it follows from the definition of \,$z$\, that the corresponding biholomorphic map maps \,$\mathrm{d}z(\infty)$\, onto \,$\mathrm{d}z'(\infty')$\,.  
\end{proof}
\begin{Theorem}\label{th:direct general}
For $A\in\mathcal{R}$ and $P,Q\in A$ of coprime orders, let $(X',\Sss',\infty,z)$ be the corresponding spectral data. Then up to isomorphy, there exists a unique one sheeted covering $\pi'':X''\to X'$ and a generalized divisor $\Sss''$ on $X''$ with the following properties:
\begin{enumerate}[(i)]
\item $\pi_*''\Sss''=\Sss'$.
\item The following diagram commutes
$$
\begin{matrix}
\langle P,Q\rangle&\hookrightarrow&A&\hookrightarrow&C(A)\\
g_1\downarrow\cong&&g_2\downarrow\cong&&g_3\downarrow\cong\\
\frac{\C[\lambda,\mu]}{(f)}&\hookrightarrow&C&\hookrightarrow&B \;.
\end{matrix}
$$
Here, the polynomial $f$ and the isomorphism $g_1$ are defined in Theorem~\ref{th:commuting operators}, and $B$ and $g_3$ are defined in Theorem~\ref{th:direct center}. We also set
 \begin{align}\label{eq:C}
 C&:=\{f\in H^0(X'',\mathcal{M})\mid\forall p\in X''\setminus(\pi'')^{-1}[\{\infty\}]:\,f_p\in\mathcal{O}_{X'',p}\},
 \end{align} 
 and $g_2$ is defined by the above diagram.
\end{enumerate}
$(X'',\Sss'',\infty,z)$ does not depend on the choice of $P, Q \in A$.
\end{Theorem}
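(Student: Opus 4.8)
The plan is to realise the chain $\langle P,Q\rangle \subset A \subset C(A)$ geometrically as an intermediate one-fold covering $X(\Sss') \xrightarrow{\rho} X'' \xrightarrow{\pi''} X'$, with $A$ corresponding to $X''$. First I would fix the identifications from Theorem~\ref{th:direct center}: the isomorphism $g_3\colon C(A)\to B$ identifies $C(A)$ with the algebra $B$ of meromorphic functions on $X(\Sss')$ holomorphic away from $\infty$, the inclusion $\C[\lambda,\mu]/(f)\hookrightarrow B$ is pullback along $\pi_{X(\Sss')}$, and $g_3$ restricts on $\langle P,Q\rangle$ to $g_1$. Thus $g_3(A)$ is an intermediate subalgebra $\C[\lambda,\mu]/(f)\subset g_3(A)\subset B$, and the theorem reduces to reconstructing a curve from $g_3(A)$ and matching it with the algebra $C$ of \eqref{eq:C}.

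For existence I would build the structure sheaf of $X''$ directly. Viewing $(\pi_{X(\Sss')})_\ast\Oss_{X(\Sss')}$ as a subsheaf of $\overline{\Oss}_{X'}$, define $(\pi'')_\ast\Oss_{X''}$ to be the subsheaf whose stalk at each $q\in X'\setminus\{\infty\}$ is the $\Oss_{X',q}$-subalgebra generated by the germs at $q$ of the elements of $g_3(A)$, and whose stalk at $\infty$ is $\Oss_{X',\infty}$. By Lemma~\ref{L:normalize-bounded} these germs agree with those of $X(\Sss')$ outside finitely many points, so the stalks are finite over $\Oss_{X'}$ and equal $\Oss_{X'}$ for all but finitely many $q$; the resulting sheaf lies between $\Oss_{X'}$ and $(\pi_{X(\Sss')})_\ast\Oss_{X(\Sss')}$ and hence defines a one-fold covering $\pi''\colon X''\to X'$ together with a factorisation $\pi_{X(\Sss')}=\pi''\circ\rho$. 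I would then set $\Sss'':=\rho_\ast\Sss(\Sss')$; since $\pi''_\ast\rho_\ast=(\pi_{X(\Sss')})_\ast$ and $(\pi_{X(\Sss')})_\ast\Sss(\Sss')=\Sss'$, property (i) follows, and the degree of $\Sss''$ equals the arithmetic genus of $X''$ by the behaviour of generalised divisors under one-fold coverings in \cite{KLSS}.

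It remains to identify the global sections. With $C$ as in \eqref{eq:C}, the inclusion $g_3(A)\subseteq C$ holds by construction, and the reverse inclusion $C\subseteq g_3(A)$ is the main obstacle: one must show that a meromorphic function on $X''$ holomorphic off $(\pi'')^{-1}[\{\infty\}]$—which, pushed to $X(\Sss')$, lies in $B$ and is locally a combination of elements of $g_3(A)$—in fact lies globally in $g_3(A)$. This is precisely the assertion that the correspondence of \cite[Section~4]{KLSS} between intermediate subalgebras and intermediate one-fold coverings is inclusion-preserving and bijective on the range in question; the finiteness that makes it run is provided by Lemma~\ref{lem:CA}, so that $A$ has finite codimension in $C(A)$, together with Lemma~\ref{L:d0}. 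Granting $C=g_3(A)$, I would set $g_2:=g_3|_A$, an isomorphism $A\to C$; the right-hand square of the diagram commutes since $g_2=g_3|_A$, and the left-hand square since $g_3|_{\langle P,Q\rangle}=g_1$.

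For uniqueness, any $(X'',\Sss'')$ satisfying (i) and (ii) forces $(\pi'')_\ast\Oss_{X''}$ to have, at each point, stalk generated over $\Oss_{X'}$ by the germs of $g_2(A)=C$, while $\Sss''$ is then pinned down by $\pi''_\ast\Sss''=\Sss'$ together with the factorisation through $X(\Sss')$; hence $(X'',\Sss'')$ coincides with the one constructed, up to isomorphism. Finally, independence of the choice of $P,Q$ follows from Theorem~\ref{th:direct center}: the triple $(X(\Sss'),\Sss(\Sss'),\infty)$, the value $\mathrm{d}z(\infty)$, and the identification $C(A)\cong B$ do not depend on $P,Q$, so $X''$—being the intermediate curve cut out inside the fixed $X(\Sss')$ by the fixed subalgebra $g_3(A)\subset B$—depends only on $A$. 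To compare two coprime pairs directly, I would reduce, as in the proof of Theorem~\ref{th:direct center}, to the case $P',Q'\in\langle P,Q\rangle$, where both constructions take place inside the same $X(\Sss')$ and therefore produce isomorphic spectral data $(X'',\Sss'',\infty,z)$, the coordinate entering only through $\mathrm{d}z(\infty)$.
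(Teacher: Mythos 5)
Your construction tracks the paper's own proof closely: like the paper, you form the subsheaf of subrings of $(\pi_{X(\Sss')})_\ast\Oss_{X(\Sss')}$ generated over $\Oss_{X'}$ by the germs of $g_3(A)$, extend it by $\Oss_{X'}$ near $\infty$, obtain the one-sheeted covering $\pi'':X''\to X'$ from this sheaf (the paper does this via \cite[Lemma~4.1]{KLSS}), take $\Sss''$ to be the direct image of $\Sss(\Sss')$ under $X(\Sss')\to X''$, set $g_2=g_3|_A$, and settle independence of the choice of $P,Q$ by reducing to $P',Q'\in\langle P,Q\rangle$ as in Theorem~\ref{th:direct center}. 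Your explicit verification of (i) via $\pi''_\ast\circ\rho_\ast=(\pi_{X(\Sss')})_\ast$ is, if anything, slightly more complete than the printed proof.

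The genuine gap is at the step you yourself call the main obstacle, the inclusion $C\subseteq g_3(A)$. You discharge it by citing an ``inclusion-preserving and bijective correspondence between intermediate subalgebras and intermediate one-fold coverings'' attributed to \cite[Section~4]{KLSS}, with finiteness supplied by Lemmas~\ref{lem:CA} and~\ref{L:d0}. No such statement is available there: what \cite[Lemma~4.1]{KLSS} gives is a bijection between \emph{sheaves of subrings} and one-sheeted coverings, a purely local assertion. The passage from ``every germ of an element of $C$ lies in the sheaf generated by $g_3(A)$'' to ``the element lies in the algebra $g_3(A)$'' is exactly the local-to-global content of the square involving $g_2$, so invoking a subalgebra--covering correspondence at this point is circular; and finite codimension of $A$ in $C(A)$ does not by itself exclude an intermediate algebra $A\subsetneq A_1\subseteq C(A)$ whose germs generate the same sheaf $\pi''_\ast\Oss_{X''}$, which is precisely what must be ruled out. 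The paper closes this by its two-inclusion argument: since $\pi''_\ast\Oss_{X''}$ is generated over $\Oss_{X'}$ by the image of $A$, and since $g_3(A)$ already contains the coordinate algebra $\C[\lambda,\mu]/(f)=g_1(\langle P,Q\rangle)$ of $X'\setminus\{\infty\}$, the algebra cut out by the holomorphy condition defining $C$ is the algebra generated by $g_3(A)$, i.e.\ $g_3(A)$ itself; some argument of this kind has to replace your citation. The same remark applies to your uniqueness paragraph: the claim that any $(X'',\Sss'')$ satisfying (i) and (ii) has $\pi''_\ast\Oss_{X''}$ generated by the germs of $C$ is not automatic --- a statement of that type is only proved later, by a Riemann--Roch argument, in Proposition~\ref{P:isomorphic-algebras} --- whereas the paper obtains uniqueness directly from the uniqueness clause of \cite[Lemma~4.1]{KLSS} applied to the sheaf it constructs.
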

\begin{proof}
In Theorems~\ref{th:commuting operators} and~\ref{th:direct center}, we have shown that $g_1$ and $g_3$ are isomorphisms, respectively.
The sheaf $\Oss'_{X'}$ is contained in $\pi(\Sss')_{\ast}\Oss_{X(\Sss')}$. Note that $A$ is a subalgebra of $C(A)$. Hence the image of $A$ under $g_3$ is contained in $B$. We identify the meromorphic functions on $X(\Sss')$ with the meromorphic functions on $X'$, in this way $B$ becomes a subalgebra of $H^0(X'\setminus \{\infty\},\pi(\Sss')_{\ast}\Oss_{X(\Sss')})$. The image of $A$ in $B$ generates on $X'\setminus \{\infty\}$ a subsheaf $\Ass$ of subrings of  $\pi(\Sss')_{\ast}\Oss_{X(\Sss')}$. It contains $\Oss_{X'}$ since $A$ contains $P$ and $Q$. The stalks of the latter subsheaf have finite codimension in the stalks of $\pi(\Sss')_{\ast}\Oss_{X(\Sss')}$, and the codimension is $0$ away from the singularities of $X'$. Therefore we may extend $\Ass$ to $X'$ by $\Ass=\Oss_{X'}$ near the smooth point $\infty$. By definition of $X(\Sss')$, $\pi(\Sss')_{\ast}\Oss_{X(\Sss')}$ acts on $\Sss'$. Due to \cite[Lemma 4.1]{KLSS}, there exists a unique one-sheeted covering $\pi'':X''\to X'$ such that $\pi''_\ast (\Oss_{X''})=\Ass$.
The sequence of one-sheeted coverings $X(\Sss')\to X''\to X'$ induces the embeddings in the lower row of the diagram. The embeddings of the upper row are obvious. 

It remains to show that there exists an isomorphism $g_2$ as in the diagram. Because $g_3$ is an isomorphism, there exists a subalgebra of $C(A)$ which is mapped isomorphically onto $C$ by $g_3$. It suffices to show that this subalgebra equals $A$. 

On the one hand, since the sheaf of subrings $\pi''_\ast (\Oss_{X''})$ of $\pi(\Sss')_{\ast}\Oss_{X(\Sss')}$ is generated by the image of $A$ under $g_3$ in $B$, this algebra is contained in $A$. On the other hand, the image of every element of $A$ with respect to $g_3$ in $B$ belongs to the subalgebra which generates $\pi''_\ast (\Oss_{X''})$ and therefore to the image of $C$ in $B$. So this subalgebra contains $A$.  

The only choice that was made in this construction was that of \,$P$\, and \,$Q$\,. The independence of \,$(X'',\mathcal{S}'')$\, from the choice of \,$P$\, and \,$Q$\, follows by the same argument as in the proof of Theorem~\ref{th:direct center}. 
\end{proof}


\begin{Lemma}
The generalized divisors $\Sss'$, $\Sss(\Sss')$ and $\Sss''$ have degree equal to the arithmetic genus of $X'$, $X(\Sss')$, $X''$ respectively, and they are non-special.
\end{Lemma}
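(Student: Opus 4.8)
The plan is to reduce the whole lemma to a single cohomological statement about $\Sss'$ on $X'$ and then to identify that statement with the defining property of a Baker-Akhiezer function.

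First I would exploit that the maps involved are finite. The $\Sss'$-halfway normalisation $\pi_{X(\Sss')}\colon X(\Sss')\to X'$ and the covering $\pi''\colon X''\to X'$ from Theorem~\ref{th:direct general} are one-sheeted coverings, hence finite, and they satisfy $(\pi_{X(\Sss')})_\ast\Sss(\Sss')=\Sss'$ and $\pi''_\ast\Sss''=\Sss'$. Since a finite morphism has vanishing higher direct images, push-forward preserves all cohomology, so $H^i(X(\Sss'),\Sss(\Sss'))\cong H^i(X',\Sss')\cong H^i(X'',\Sss'')$ for $i=0,1$. In particular the three divisors share the same $h^1$, so all three are non-special as soon as $\Sss'$ is; and they share the same Euler characteristic, so by the Riemann-Roch theorem for generalised divisors the identity $\deg(\Sss')=p_a(X')$ (where $p_a$ denotes the arithmetic genus) propagates to $\deg(\Sss(\Sss'))=p_a(X(\Sss'))$ and $\deg(\Sss'')=p_a(X'')$. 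It therefore suffices to treat $\Sss'$ on $X'$.

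By Riemann-Roch one has $h^0(X',\Sss')-h^1(X',\Sss')=\deg(\Sss')-p_a(X')+1$, so the lemma for $\Sss'$ reduces to the two claims $h^0(X',\Sss')=1$ and $\deg(\Sss')=p_a(X')$, after which $h^1(X',\Sss')=0$ is automatic. Both I would read off from the single dimension count
\begin{equation*}
\dim_{\C} H^0\bigl(X',\Sss'(k\cdot\infty)\bigr)=k+1\qquad\text{for all }k\geq 0,\tag{$\ast$}
\end{equation*}
where $\Sss'(k\cdot\infty)=\Sss'\otimes\Oss_{X'}(k\cdot\infty)$. Indeed, $(\ast)$ at $k=0$ gives $h^0(X',\Sss')=1$; and for $k\gg 0$ the twist $\Sss'(k\cdot\infty)$ has large degree, hence vanishing $h^1$, so $(\ast)$ together with Riemann-Roch yields $k+1=(\deg(\Sss')+k)-p_a(X')+1$, i.e. $\deg(\Sss')=p_a(X')$.

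The heart of the argument is thus $(\ast)$, and this is where I would invoke the Krichever correspondence of \cite{KLSS}. The key observation is that $\Sss'$ is generated by the components $\chi_i=\psi^{(i-1)}(t_0)$ of the normalised common eigenfunction, so $\Sss'$ is precisely the pole divisor of the Baker-Akhiezer function, and a section of $\Sss'(k\cdot\infty)$ is a meromorphic function on $X'$ bounded by $\Sss'$ at the finite points and of pole order at most $k$ in the coordinate $z$ at $\infty$. Using the leading companion form \eqref{eq:asymp-M} of $M(\lambda)$ near $\infty$ one controls the behaviour there, while Lemma~\ref{L:normalize-bounded} confines the finite poles; identifying this section space with a solution space of the spectral ODE then gives dimension exactly $k+1$. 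As a sanity check, in the free case $P=D^m$, $Q=D^n$ one has $\chi=(1,w,\dots,w^{m-1})^T$ with $\lambda=w^m$ and $\mu=w^n$, so $\Sss'$ coincides with the push-forward of $\Oss$ from the normalisation at the singular point and $H^0(X',\Sss'(k\cdot\infty))$ is the space of polynomials of degree $\leq k$ in the uniformiser $w$, of dimension $k+1$, confirming $(\ast)$. The main obstacle is establishing $(\ast)$ uniformly, i.e. simultaneously accounting for the finite singularities of $X'$ and for the positive geometric genus of its normalisation: the companion normalisation at $\infty$ handles the essential singularity, but the ``moving'' poles at smooth points contribute the genus part, and controlling them precisely is equivalent to the existence and uniqueness of the normalised Baker-Akhiezer function. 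For this I would rely on the Krichever-construction and Riemann-Roch/Serre-duality results for singular curves already set up in \cite{KLSS}; non-speciality of $\Sss'$ is, in the end, just a restatement that no unexpected global section occurs, which is exactly this uniqueness.
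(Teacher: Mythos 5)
Your reduction steps are sound and in fact run parallel to the paper's own skeleton: the paper also handles $\Sss''$ and $\Sss(\Sss')$ by pushing down to $X'$ (it uses $H^0(X'',\Sss'')=H^0(X',\Sss')$ together with the degree bookkeeping $\deg(\Sss')=\deg(\Sss'')+(g(X')-g(X''))$, which is the same computation as your Euler-characteristic argument), and for $\Sss'$ itself it establishes exactly your two claims $h^0(X',\Sss')=1$ and $h^1(X',\Sss')=0$ and then invokes Riemann--Roch. The problem is that you never prove the one statement carrying all the content, namely your count $\dim H^0\bigl(X',\Sss'(k\cdot\infty)\bigr)=k+1$ (equivalently $h^0(\Sss')=1$ and $h^1(\Sss')=0$). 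The phrase ``identifying this section space with a solution space of the spectral ODE then gives dimension exactly $k+1$'' is not an argument: away from $\infty$ a section of $\Sss'(k\cdot\infty)$ is an $\Oss_{X'}$-linear combination of $\chi_1,\dots,\chi_m$ with controlled pole at $\infty$, and the entire difficulty is to bound the dimension of the space of such combinations; nothing identifies it a priori with a solution space of the right dimension.

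Worse, the route you propose for filling this gap is circular. The Krichever construction and the existence and uniqueness of the normalised Baker--Akhiezer function in \cite{KLSS} (Theorem~8.8 there, used in Section~\ref{Se:inverse}) take as input \emph{spectral data}, i.e.\ a generalised divisor whose degree equals the arithmetic genus, and the discreteness of the exceptional set $T$ (Lemma~\ref{L:Tdiscrete}) likewise rests on Riemann--Roch computations that presuppose this degree. The present lemma is precisely what licenses feeding the direct-problem data $(X',\Sss',\infty,z)$ into that machinery, so you cannot invoke the machinery to prove it. The paper instead proves the two cohomological statements by hand: $h^0=1$ follows because $\psi_k$ has a pole of order $k-1$ at $\infty$ (by \eqref{eq:free_V} and the asymptotics from Lemma~\ref{L:normalize-bounded}), so any global section holomorphic at $\infty$ is a multiple of $\psi_1$; and $h^1=0$ follows from an explicit \v{C}ech coboundary argument on the two-set Leray covering $(X'\setminus\{\infty\},\,U'\cup\{\infty\})$ of \cite[Proposition~4.5]{KLSS}: a cocycle has the form $f_1\psi_1+\dots+f_m\psi_m$ with $f_k$ holomorphic in $\lambda$ on $\{|\lambda|>R\}$, one splits $f_k=g_k-\lambda^{-1}h_k$ into an entire part and a part holomorphic at $\infty$, and uses that $\lambda^{-1}\psi_k$ is holomorphic at $\infty$ to exhibit the cocycle as a coboundary. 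Some argument of this kind --- or another proof of your count that is genuinely independent of the Baker--Akhiezer theory --- is what your proposal is missing.
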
 

\begin{proof}
We first show the claim for $X'$ and $\Sss'$. At $\infty$, the function $\psi_k$ has a pole of order $k-1$ because of \eqref{eq:free_V} and the asymptotics shown in the proof of Lemma~\ref{L:normalize-bounded}. Therefore every linear combination of the $\psi_k$ that is holomorphic at $\infty$ is a multiple of $\psi_1$. This shows that $\dim H^0(X',\Sss')=1$. Let $U' := \{(\lambda,\mu) \in X'\setminus \{\infty\}\bigr|\;|\lambda|>R\}$, where we choose $R>0$ large enough so that $\Sss'$ is equal to $\Oss_{X'}$ on $U'$. Then $\mathfrak{U} := (X'\setminus \{\infty\},U' \cup \{\infty\})$ is a Leray covering of $(X',\Sss')$ by \cite[Proposition~4.5]{KLSS}. We use this covering to show that $H^1(X',\Sss')=0$. Let $f\in H^1(\mathfrak{U},\Sss')$. By \cite[Proposition~4.5]{KLSS}, we have $f=f_1\psi_1 + \dotsc + f_m\psi_m$ with holomorphic functions $f_1,\dotsc,f_m$ on $U := \{\lambda \in \C\bigr|\;|\lambda|>R\}$. We can write $f_k=g_k-\lambda^{-1}\,h_k$, where $g_k$ is an entire function, and $h_k$ is a holomorphic function on $U \cup \{\infty\}$. $h := h_1\,\lambda^{-1}\,\psi_1 + \dotsc + h_m\,\lambda^{-1}\,\psi_m$ is holomorphic on $U'\cup\{\infty\}$, because the pole order of $\psi_k$ at $\infty$ is at most $m-1$, so $\lambda^{-1}\,\psi_k$ is holomorphic at $\infty$. With $g := g_1\,\psi_1 + \dotsc + g_m\,\psi_m$, we have $f=g-h$, and therefore $f$ is a boundary with respect to $(\mathfrak{U},\Sss')$. This shows that $H^1(X',\Sss')=H^1(\mathfrak{U},\Sss')=0$. By Riemann-Roch's Theorem \cite[Theorem~5.2]{KLSS}, it follows that $\deg(\Sss')$ equals the arithmetic genus of $X'$, and that $\Sss'$ is non-special.

We now consider $\Sss''$ on $X''$. On one hand, we have $H^0(X'',\Sss'')= H^0(X',\Sss')$ because of $\pi''_* \Sss''=\Sss'$. On the other hand, because of $\Sss' \supset \pi''_*\Oss_{X''}$, we have
\begin{align*}
\deg(\Sss') & = \dim H^0(X',\Sss'/\Oss_{X'}) \\
& = \dim H^0(X',\Sss'/\pi''_*\Oss_{X''}) + \dim H^0(X',\pi''_*\Oss_{X''}/\Oss_{X'}) \\ 
& = \deg(\Sss'') + (g(X')-g(X'')) \; . 
\end{align*}
Because $\deg(\Sss')$ equals the arithmetic genus $g(X')$ by the previous part of the proof, $\deg(\Sss'')=g(X'')$ follows. Therefore also $\Sss''$ is non-special. 

This argument likewise applies to $(X(\Sss'),\Sss(\Sss'))$.
\end{proof}

\begin{Proposition}
Suppose that we are in case~1, i.e.~\,$\mathcal{A}=C^\infty(I,\R)[D]$\, with an open interval \,$I \subset \R$\,. Then the spectral data \,$(X'',\mathcal{S}'',\infty,z)$\, of \,$A \in \mathcal{R}$\, satisfy the following reality conditions: 

\begin{enumerate}
\item 
There exists an anti-holomorphic involution \,$\rho$\, on \,$X''$\, so that \,$\infty$\, is a smooth point of the real singular curve given by the fixed point set of \,$\rho$\,, and \,$\rho^* \overline{z}=-z$\,. For any \,$P,Q \in A$\, of co-prime order, \,$\rho$\, acts on the eigenvalues \,$(\lambda,\mu)$\, as \,$(\lambda,\mu) \mapsto (\bar{\lambda},\bar{\mu})$\,. 

\item
We have \,$\rho_* \overline{\mathcal{S}''} = \mathcal{S}''$\,, where the generalised divisor \,$\rho_* \overline{\mathcal{S}''}$\, is characterised by 
$$ H^0(U,\rho_* \overline{\mathcal{S}''}) = \{ \overline{f} \circ \rho \mid f \in H^0(\rho(U),\mathcal{S}'') \} $$
for any open subset \,$U \subset X''$\,. 
\end{enumerate} 
\end{Proposition}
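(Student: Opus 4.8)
The plan is to deduce every reality statement from the single analytic fact that in case~1 the coefficients of $P$ and $Q$ are real-valued, and then to transport this through the whole construction of Section~\ref{Se:direct}. First I would record the reality of the monodromy data. Since the $\alpha_i$ and $\beta_j$ are real functions on $I$, the companion matrix $U(t,\lambda)$ of \eqref{eq:def-U} and the matrix $V(t,\lambda)$ depend polynomially on $\lambda$ with real (differential-polynomial) coefficients, so that $\overline{U(t,\lambda)}=U(t,\bar\lambda)$ and $\overline{V(t,\lambda)}=V(t,\bar\lambda)$. Taking complex conjugates in $(D-U(\,\cdot\,,\lambda))g=0$, $g(t_0,\lambda)=\unity$ and invoking uniqueness of the fundamental solution gives $\overline{g(t,\lambda)}=g(t,\bar\lambda)$, and then \eqref{eq:M-and-V} yields $\overline{M(\lambda)}=M(\bar\lambda)$. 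Consequently $\overline{f(\lambda,\mu)}=\det(\bar\mu\unity-\overline{M(\lambda)})=\det(\bar\mu\unity-M(\bar\lambda))=f(\bar\lambda,\bar\mu)$, so $f$ has real coefficients and $(\lambda,\mu)\mapsto(\bar\lambda,\bar\mu)$ maps the zero set \eqref{eq:eigenvaluecurve} to itself. This defines an anti-holomorphic involution $\rho$ on $X'\setminus\{\infty\}$, and by construction the meromorphic functions $\lambda,\mu$ attached to any coprime $P,Q\in A$ satisfy $\lambda\circ\rho=\bar\lambda$ and $\mu\circ\rho=\bar\mu$, which is the asserted action on eigenvalues.

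Next I would extend $\rho$ across $\infty$ and verify the conditions on the coordinate $z$. Since $z\circ\rho\to 0$ as $z\to 0$, the involution extends anti-holomorphically with $\rho(\infty)=\infty$. Writing $\tilde z:=\rho^*\bar z=\overline{z\circ\rho}$, a local holomorphic coordinate at $\infty$, I would feed $\lambda\circ\rho=\bar\lambda$ and $\mu\circ\rho=\bar\mu$ into the defining asymptotics $\lambda=(z/2\pi\ci)^{-m}$, $\mu=(z/2\pi\ci)^{-n}+O(z^{1-n})$. Using $\overline{2\pi\ci}=-2\pi\ci$, conjugation of these relations shows that $z_\ast:=-\tilde z$ satisfies $\lambda=(z_\ast/2\pi\ci)^{-m}$ and $\mu=(z_\ast/2\pi\ci)^{-n}+O(z_\ast^{1-n})$; since $m,n$ are coprime the coordinate with this property is unique, whence $z_\ast=z$, i.e.\ $\rho^*\bar z=-z$. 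The fixed-point set near $\infty$ is then $\{z=-\bar z\}=\{z\in\ci\R\}$, a smooth real arc through $z=0$, so $\infty$ is a smooth point of the real curve. This establishes item~(1) on $X'$.

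To pass to $X''$ I would exhibit $\rho$-reality of the generators and use the functoriality of the covering constructions. The normalized eigenfunction $\chi$, characterised by $M\chi=\mu\chi$ and $\chi_1=1$, has the property that $\overline{\chi}$ is the normalized eigenfunction of $M(\bar\lambda)=\overline{M(\lambda)}$ for the eigenvalue $\bar\mu$; uniqueness of this normalization gives $\chi\circ\rho=\bar\chi$ componentwise. Thus the generators $\chi_1,\dots,\chi_m$ of $\Sss'$, and likewise the meromorphic functions $\nu$ representing elements of the real algebra $A$ (respectively $C(A)$), are fixed by the conjugate-linear ring homomorphism $\sigma\colon f\mapsto\overline{f\circ\rho}$. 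Since $\sigma$ preserves $\Oss_{X'}$ and fixes a generating set, the sheaf of rings $\Ass=\pi''_*\Oss_{X''}$ of Theorem~\ref{th:direct general} is $\sigma$-stable, i.e.\ $\rho^*\overline{\Ass}=\Ass$; the same holds for $(\pi_{X(\Sss')})_*\Oss_{X(\Sss')}$. By the functoriality of the one-sheeted covering attached to such a sheaf of rings (\cite[Lemma~4.1]{KLSS}), $\rho$ lifts to a unique anti-holomorphic involution, again denoted $\rho$, on $X''$ (and on $X(\Sss')$) covering the one on $X'$; this completes item~(1).

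Finally I would obtain the divisor identity. On $X(\Sss')$ the divisor $\Sss(\Sss')$ is generated by the pullbacks $\chi_k\circ\pi_{X(\Sss')}$, and since $\pi_{X(\Sss')}\circ\rho=\rho\circ\pi_{X(\Sss')}$ these pullbacks again satisfy $(\chi_k\circ\pi_{X(\Sss')})\circ\rho=\overline{\chi_k\circ\pi_{X(\Sss')}}$; hence $\rho_*\overline{\Sss(\Sss')}=\Sss(\Sss')$ directly from the generator description. Because forming the direct image of a generalised divisor under a covering commutes with the operation $\Sss\mapsto\rho_*\overline{\Sss}$, and $\Sss''$ is characterised among generalised divisors on $X''$ by $\pi''_*\Sss''=\Sss'$ (Theorem~\ref{th:direct general}), pushing $\rho_*\overline{\Sss(\Sss')}=\Sss(\Sss')$ down to $X''$ gives $\rho_*\overline{\Sss''}=\Sss''$, which is item~(2). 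The routine parts are the conjugation identities for $U,V,g,M,f$ and the coordinate computation at $\infty$; the step requiring genuine care is the functoriality of the \cite{KLSS} covering and halfway-normalisation constructions under an anti-holomorphic involution, together with the compatibility of direct images of generalised divisors with the reality operation, which is where I would spend most of the effort.
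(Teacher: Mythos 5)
Your proposal is correct and follows essentially the same route as the paper: deduce $\overline{M(\lambda)}=M(\bar\lambda)$ from the reality of the coefficients, conclude that $f$ is real so that $(\lambda,\mu)\mapsto(\bar\lambda,\bar\mu)$ defines the involution, extend it over $\infty$, lift it through the one-sheeted covering to $X''$, and obtain $\rho_*\overline{\mathcal{S}''}=\mathcal{S}''$ from the reality of the normalised eigenfunction. The only cosmetic difference is that you verify $\rho^*\overline{z}=-z$ via the uniqueness of the coordinate $z$ determined by the asymptotics of $\lambda$ and $\mu$, whereas the paper reads it off from the realness of the highest coefficient $\mu^m+c\lambda^n$ of $f$; your added detail on the $\sigma$-stability of $\pi''_*\mathcal{O}_{X''}$ is a sound elaboration of what the paper leaves implicit.
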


\begin{proof}
We consider differential operators \,$P,Q\in A$\, of co-prime order \,$m$\, and \,$n$\,, respectively. 
Because we are in case 1, \,$P$\, and \,$Q$\, have real coefficients. Therefore the matrices \,$U$\, and \,$V$\, from the proof of Theorem~\ref{th:commuting operators} are real for \,$\lambda \in \R$\, and thus also \,$M(\lambda)$\, is real for \,$\lambda \in \R$\,. This shows that \,$M(\bar{\lambda}) = \overline{M(\lambda)}$\, for all \,$\lambda \in \C$\,. Therefore the polynomial \,$f$\, has real coefficients, and hence \,$\rho: (\lambda,\mu) \mapsto (\bar{\lambda},\bar{\mu})$\, is an anti-holomorphic involution on the singular curve \,$X' \setminus \{\infty\}$\,.

We extend \,$\rho$\, to \,$X'$\, by setting \,$\rho(\infty)=\infty$\,. It was shown in Theorem~\ref{th:commuting operators} that the highest coefficient of the polynomial \,$f$\, is of the form \,$\mu^m+c\lambda^n$\, with a non-zero constant \,$c$\,, which is real in the present setting. Therefore \,$\infty$\, is a smooth point of the fixed point set of \,$\rho$\,, and \,$\rho^* \overline{z}=-z$\,. 

Because \,$X'' \to X'$\, is a one-sheeted covering, we obtain an anti-holomorphic map \,$\rho$\, on \,$X''$\, with the desired properties.

As the linear form \,$\ell$\, is real, the normalised section \,$\psi$\, also satisfies \,$\overline{\psi} \circ \rho = \psi$\,, and therefore \,$\rho_* \overline{\mathcal{S}''} = \mathcal{S}''$\, holds. 
\end{proof}
	
\section{The inverse problem}
\label{Se:inverse}

We now solve the corresponding inverse problem. We let spectral data \,$(X',\mathcal{S}',\infty,z)$\, be given. This means that \,$X'$\, is a singular curve with a marked smooth point \,$\infty$\, and a local parameter \,$z$\, defined on an open neighbourhood \,$U_1$\, of \,$\infty$\,, and \,$\mathcal{S}'$\, is a generalised divisor on \,$X'$\, of degree equal to the arithmetic genus of \,$X'$\,. 

We will use the Krichever construction as in \cite[Section~7]{KLSS}. In particular we define the one-parameter group of invertible sheaves \,$\mathcal{L}_{1/z}(t)$\, with \,$t\in \C$\,: Let \,$U_0 := X' \setminus \{\infty\}$\,, then \,$(U_0,U_1)$\, is a covering of \,$X'$\, and the cocycle \,$z^*\,\exp(-2\pi\ci t/z)$\, defines \,$\mathcal{L}_{1/z}(t)$\, with respect to this covering. On an open subset \,$O \subset \C$\,, the same cocycles with variable \,$t \in O$\, also define a sheaf \,$\mathcal{L}_{1/z}$\, on \,$X' \times O$\,.

The Krichever construction depends on the choice of the local parameter \,$z$\, only via the Mittag-Leffler distribution induced by \,$\tfrac{1}{z}$\,. 
For any two different local parameters \,$z_1,z_2$\, on \,$X'$\, around \,$\infty$\, there exists a constant \,$c = \tfrac{\mathrm{d}z_2}{\mathrm{d}z_1}(\infty)\neq 0$\, so that 
\,$\tfrac{1}{z_1}-\tfrac{c}{z_2}$\, is holomorphic. 
This shows that our construction in fact depends on the choice of the local coordinate \,$z$\, only in terms of the Taylor coefficient \,$\mathrm{d}z(\infty)$\,. 

As in \cite[Equation~(31)]{KLSS} we define
\begin{equation}
\label{eq:T-definition}
T := \{ t \in \C \mid H^0(X',\mathcal{S}'_{-\infty} \otimes \mathcal{L}_{1/z}(t)) \neq 0 \} \; ,
\end{equation}
where \,$\mathcal{S}'_{-\infty}$\, is the generalised divisor obtained by multiplying \,$\mathcal{S}'$\, with the invertible sheaf defined by the classical divisor \,$-\infty$\,.
In \cite[Theorem~8.6]{KLSS} it was shown that \,$T$\, is a subvariety of \,$\C$\,. For our specific situation we improve that result by the following lemma.

%

\begin{Lemma}
\label{L:Tdiscrete}
\,$T$\, is discrete.
\end{Lemma}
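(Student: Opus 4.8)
The plan is to exploit that $T$ lives in the one-dimensional base $\C$, so that being a \emph{proper} subvariety and being \emph{discrete} amount to the same thing. By \cite[Theorem~8.6]{KLSS} the set $T$ is an analytic subvariety of $\C$, and every analytic subvariety of the connected one-dimensional manifold $\C$ is either all of $\C$ or a discrete set of points. Hence the entire content of the lemma reduces to ruling out $T=\C$, i.e.\ to exhibiting a single $t_0\in\C$ with $H^0(X',\mathcal{S}'_{-\infty}\otimes\mathcal{L}_{1/z}(t_0))=0$.

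For the bookkeeping I would first record the relevant degree. Since $\mathcal{S}'$ has degree equal to the arithmetic genus $g(X')$, twisting by the classical divisor $-\infty$ drops the degree by one, and the Krichever cocycle $z^*\exp(-2\pi\ci t/z)$ is nowhere vanishing on the overlap $U_0\cap U_1$ and so defines a sheaf of degree zero, the twisted sheaf $\mathcal{S}'_{-\infty}\otimes\mathcal{L}_{1/z}(t)$ has degree $g(X')-1$ for every $t$. By the Riemann--Roch theorem \cite[Theorem~5.2]{KLSS} such a divisor satisfies $h^0-h^1=0$, so the \emph{generic} degree-$(g(X')-1)$ divisor is non-special with $h^0=h^1=0$, and the locus of such divisors carrying a nonzero section is a proper subvariety of the ambient Picard variety. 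Therefore $T=\C$ can occur only if the holomorphic one-parameter flow $t\mapsto[\mathcal{S}'_{-\infty}\otimes\mathcal{L}_{1/z}(t)]$ lies \emph{entirely} inside this ``theta locus''. (The degenerate case $g(X')=0$, where $X'\cong\P$, $\mathcal{S}'\cong\Oss_{X'}$, and $\mathcal{L}_{1/z}(t)$ is trivial for all $t$, can be treated by hand: there $\mathcal{S}'_{-\infty}\cong\Oss_{X'}(-1)$ has no sections, so $T=\varnothing$; thus I assume $g(X')\geq 1$ in what follows.)

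To exclude the bad possibility I would show that the theta (or tau) function whose zero set is $T$ — the holomorphic function on $\C$ underlying \cite[Theorem~8.6]{KLSS} — does not vanish identically. The natural route is an asymptotic analysis of a defining section as $t\to\infty$: writing a hypothetical section of $\mathcal{S}'_{-\infty}\otimes\mathcal{L}_{1/z}(t)$ as a Baker--Akhiezer-type function whose only essential singularity at $\infty$ is governed by $\exp(2\pi\ci t/z)$, one lets $t$ tend to infinity along the ray in which $\mathrm{Re}(1/z)$ forces exponential growth of this factor and reads off that no global section can survive for large $|t|$ in that direction. Equivalently, one may work with the degree-$g(X')$ sheaf $\mathcal{S}'\otimes\mathcal{L}_{1/z}(t)$, which is generically non-special with a unique-up-to-scale Baker--Akhiezer function $\psi_t$, note that $t\in T$ forces $\psi_t$ to vanish at the smooth point $\infty$, and check that the holomorphic function $t\mapsto\psi_t(\infty)$ is not identically zero by its leading asymptotics. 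This non-vanishing — that the Krichever flow is not contained in the theta locus — is exactly where the real work sits and is the step I expect to be the main obstacle; the dimension reduction in the first paragraph is purely formal, and the degree count and Riemann--Roch input are routine once it is in place.
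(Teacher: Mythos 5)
Your first paragraph is correct, but it is the easy half of the lemma: since $T$ is an analytic subvariety of $\C$ by \cite[Theorem~8.6]{KLSS} and $\C$ is connected and one-dimensional, $T$ is either discrete or all of $\C$; the degree count and the genus-zero case are routine. The genuine gap is that you never rule out $T=\C$ --- you explicitly defer this (``where the real work sits''), and the routes you sketch do not close it. The asymptotic route founders on two points. First, $T$ is in general unbounded: already for a smooth elliptic spectral curve the flow $t\mapsto \mathcal{L}_{1/z}(t)$ is a non-trivial homomorphism $\C\to\mathrm{Jac}(X')\cong\C/\Lambda$, so $T$ is a translate of a rank-two lattice in $\C$; hence there is no direction along which sections ``cannot survive for large $|t|$'' unless the ray is chosen carefully relative to this lattice, and nothing in the factor $\exp(2\pi\ci t/z)$ singles out such a ray, since near $z=0$ that factor has both growth and decay sectors for every $t$. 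Second, the variant via $t\mapsto\psi_t(\infty)$ is circular: under the hypothesis $T=\C$ that you must refute, every local holomorphic family of sections of $\mathcal{S}'\otimes\mathcal{L}_{1/z}(t)$ vanishes identically at $\infty$, so the function whose non-vanishing you want to ``check by leading asymptotics'' is identically zero by assumption; there are no asymptotics to inspect.

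The paper closes exactly this gap with a differentiation argument that is absent from your proposal. Flatness of $\mathcal{S}'_{-\infty}\otimes\mathcal{L}_{1/z}$ over the base \cite[Lemma~8.5]{KLSS} together with the semicontinuity and base-change theorems \cite[Chapter~III Theorem~4.7]{GPR} yields, on some open set $O$ of parameters, a section $\psi(x,t)$ depending holomorphically on $t$ such that $\phi(x,t)=\psi(x,t)\,z\exp(-2\pi\ci t/z)$ is holomorphic near $\infty$ and vanishes there; let $N\geq 1$ be the first index with $\phi_N\not\equiv0$ in the expansion $\phi=\sum_{n\geq1}z^n\phi_n(t)$. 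Differentiating in $t$ brings down factors $2\pi\ci/z$, so $\partial_t^l\psi(\cdot,t)$ is a section of $\mathcal{S}'_{(l-N)\infty}\otimes\mathcal{L}_{1/z}(t)$ whose pole order at $\infty$ grows with $l$; thus $\psi,\partial_t\psi,\dots,\partial_t^m\psi$ give $m+1$ linearly independent elements of $H^0(X',\mathcal{S}'_{(m-N)\infty}\otimes\mathcal{L}_{1/z}(t))$, while Serre duality \cite[Corollary~6.6(c)]{KLSS} and Riemann--Roch bound that dimension by $m-N+1$ for $m$ large. Hence $N\leq 0$, a contradiction. This dimension-count via $t$-derivatives is the actual content of the lemma --- the Krichever flow, being ``tangent at $\infty$'', generates under differentiation more sections than Riemann--Roch permits --- and your proposal contains no substitute for it.
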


\begin{proof}
We assume on the contrary that \,$T$\, is not discrete. 
Because \,$T$\, is a subvariety of \,$\C$\, by \cite[Theorem~8.6]{KLSS}, this means that it contains an open subset \,$O_1 \subset \C$\,. 

Let \,$k>0$\, be the minimal dimension of \,$H^0(X',\mathcal{S}'_{-\infty} \otimes \mathcal{L}_{1/z}(t))$\, for \,$t \in O_1$\,. The sheaf \,$\mathcal{S}'_{-\infty}$\, on \,$X'$\, induces a sheaf on \,$X' \times O_1$\,, which we also denote by \,$\mathcal{S}'_{-\infty}$\,. Then the sheaf  \,$\mathcal{S}'_{-\infty}\otimes \mathcal{L}_{1/z}$\, on \,$X' \times O_1$\, is flat with respect to the projection \,$X' \times O_1 \to O_1$\, by \cite[Lemma~8.5]{KLSS}. Because of \cite[Chapter~III Theorem~4.7~(a)]{GPR}, the map \,$t \mapsto \dim(H^0(X',\mathcal{S}'_{-\infty}\otimes \mathcal{L}_{1/z}(t)))$\, is upper semi-continuous, and therefore the subset \,$O_2 \subset O_1$\, on which this dimension is equal to \,$k$\, is open. Due to \cite[Chapter~III Theorem~4.7~(d)]{GPR} the spaces $H^0(X',\mathcal{S}'_{-\infty} \otimes \mathcal{L}_{1/z}(t))$ are the fibres of a vector bundle over $t \in O_2$. In particular there exists a non-trivial section of $\mathcal{S}'_{-\infty}\otimes \mathcal{L}_{1/z}$ on $X'\times O_2$. By definition of \,$\mathcal{L}_{1/z}$\,, this section corresponds to a section \,$\psi$\, of \,$\mathcal{S}'_{-\infty}$\, on \,$U_0 \times O_2$\, such that the function 
\begin{equation}
\label{eq:L:discrete:phi}
\phi(x,t)=\psi(x,t)z\exp(-2\pi\ci\, t/z)
\end{equation}
is holomorphic on $U_1\times O_2$ and vanishes on \,$\{\infty\} \times O_2$\,. 

Let $\phi(x,t)=\sum_{n\ge 1}z^n\phi_n(t)$ be the Taylor expansion of $\phi$ with respect to the local coordinate $z$ at $\infty$. We let $N \geq 1$ be the smallest index such that $\phi_{N}$ does not vanish identically on \,$\{\infty\} \times O_2$\,. Then \,$O := \{ t\in O_2 \mid \phi_{N}(\infty,t)\neq 0\}$\, is an open subset of \,$O_2$\,.  Due to Equation~\eqref{eq:L:discrete:phi}, the $m$-th derivative $x \mapsto \frac{\partial^m}{\partial t^m}\psi(x,t)$ has for every $m\in\mathbb{N}$ and \,$t\in O$\, an $(m-N)$-th order pole at $x=\infty$. Furthermore, on $U_0$ these $m$-th derivatives are section of $\mathcal{S}'$. This implies that for all $m\in\mathbb{N}$ and \,$t\in O$\, the sheaf $\mathcal{S}'_{(m-N)\infty}\otimes \mathcal{L}_{1/z}(t)$ has a non-trivial section which does not belong to $\mathcal{S}'_{(m-N-1)\infty}\otimes \mathcal{L}_{1/z}(t)$. For sufficiently large $m$ the degree of $\mathcal{S}'_{(m-N)\infty}$ is greater than $2g-2$, and due to S\'{e}rre Duality~\cite[Corollary~6.6(c)]{KLSS} $H^1(X',\mathcal{S}'_{(m-N)\infty})$ is trivial. Now Riemann-Roch implies $\dim H^0(X',\mathcal{S}'_{(m-N)\infty})=m-N+1$. Because the derivatives $\frac{\partial^l}{\partial t^l}\psi(\infty,t)$ belong to this space for $l=0,\ldots,m$, we have $m-N+1\ge m+1$. This implies $N\le 0$, which contradicts \,$N\geq 1$\,.
\end{proof}

For every \,$t_0\in \C\setminus T$\,, \,$\mathcal{S}' \otimes \mathcal{L}_{1/z}(t_0)$\, is equivalent to a generalised divisor \,$\mathcal{S}''$\, with \,$\mathcal{O}_{X'} \subset \mathcal{S}''$\, and support contained in \,$X' \setminus \{\infty\}$\, by \cite[Lemma~8.4]{KLSS}. Because \,$\mathcal{S}'' \otimes \mathcal{L}_{1/z}(t)$\, is equivalent to \,$\mathcal{S}' \otimes \mathcal{L}_{1/z}(t_0+t)$\,, we then have
$$ \{ t \in \C \mid H^0(X',\mathcal{S}''_{-\infty} \otimes \mathcal{L}_{1/z}(t)) \neq 0 \} \;=\; t_0 + T  \; . $$
By \cite[Theorem~8.8]{KLSS}, \,$\mathcal{S}'$\, therefore induces a Baker-Akhiezer function \,$\psi: (X' \setminus \{\infty\}) \times (\C\setminus T) \to \C$\, such that the holomorphic extension of the function \,$\psi(x,t) \cdot \exp(-2\pi\ci t/z)$\, takes the value \,$1$\, at \,$x=\infty$\,. 

\begin{Theorem}
\label{T:inverseproblem}
For given spectral data \,$(X',\mathcal{S}',\infty,z)$\,, there exists an monomorphism of algebras 
\begin{gather*} 
	\Phi: \{f\in H^0(X',\mathcal{M})\,\mid\,\forall\, p\in X'\setminus\{\infty\}:\,f_p\in\mathcal{O}_{X',p}\}
	\;\longrightarrow\; \mathcal{A}(\C \setminus T)   \\
	f \;\longmapsto\; \Phi(f) \qquad \text{so that} \quad \Phi(f)\psi = f \cdot \psi \;, 
\end{gather*}
where \,$\psi$\, is the Baker-Akhiezer function corresponding to \,$\mathcal{S}'$\,. The two highest coefficients of \,$\Phi(f)$\, are constant. The image \,$A$\, of \,$\Phi$\, in \,$\mathcal{A}(\C \setminus T)$\, belongs to \,$\mathcal{R}$\,. 
\end{Theorem}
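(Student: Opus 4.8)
The plan is to define \,$\Phi(f)$\, through the action of the function \,$f$\, on the \baf \,$\psi$\,, and to extract every assertion from the asymptotics of \,$\psi$\, at \,$\infty$\, together with the vanishing \,$H^0(X',\Sss'_{-\infty}\otimes\mathcal{L}_{1/z}(t))=0$\, valid for \,$t\in\C\setminus T$\, (which by Lemma~\ref{L:Tdiscrete} is a genuine open dense domain). Fix \,$f$\, in the source algebra with a pole of order \,$d$\, at \,$\infty$\,, and abbreviate \,$\Psi:=\psi\cdot\exp(-2\pi\ci t/z)$\,, so that \,$\Psi$\, extends holomorphically across \,$\infty$\, with \,$\Psi(\infty,t)=1$\,, and \,$\psi=\exp(2\pi\ci t/z)\sum_{j\ge 0}\xi_j(t)z^j$\, near \,$\infty$\, with \,$\xi_0\equiv 1$\,. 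The functions \,$\psi, D\psi,\dots,D^k\psi,\dots$\, then have pole orders \,$0,1,\dots,k,\dots$\, at \,$\infty$\, (since \,$D=\tfrac{d}{dt}$\, produces the factor \,$\tfrac{2\pi\ci}{z}$\, to leading order), and away from \,$\infty$\, each \,$D^k\psi$\, is a section of \,$\Sss'\otimes\mathcal{L}_{1/z}(t)$\, because the \,$t$-dependence of \,$\psi$\, is holomorphic there. I would determine \,$a_0(t),\dots,a_d(t)$\, recursively by matching, in the Laurent expansion at \,$\infty$\,, the coefficients of \,$z^{-d},z^{-d+1},\dots,z^{0}$\, of \,$f\psi$\, with those of \,$\sum_{k=0}^d a_k(t)D^k\psi$\,; this is a triangular system with nonzero diagonal, hence uniquely solvable, and the top match forces \,$a_d\neq 0$\,, so \,$\ord\Phi(f)=d$\,. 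The remainder \,$r:=f\psi-\sum_{k=0}^d a_k(t)D^k\psi$\, is a global section of \,$\Sss'_{-\infty}\otimes\mathcal{L}_{1/z}(t)$\,: away from \,$\infty$\, it is a difference of sections of \,$\Sss'\otimes\mathcal{L}_{1/z}(t)$\,, while the matching annihilates its Laurent tail through order \,$z^{0}$\,, so \,$r\exp(-2\pi\ci t/z)$\, vanishes at \,$\infty$\,. Since \,$t\notin T$\,, this space is trivial, whence \,$r\equiv 0$\, and \,$\Phi(f):=\sum_{k=0}^d a_k(t)D^k$\, satisfies \,$\Phi(f)\psi=f\psi$\,. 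The \,$a_k(t)$\, are built from the Taylor coefficients \,$\xi_j(t)$\, of \,$\Psi$\,, which depend holomorphically on \,$t\in\C\setminus T$\, by \cite[Theorem~8.8]{KLSS}, so \,$\Phi(f)\in\mathcal{A}(\C\setminus T)$\,.

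Uniqueness and injectivity follow from the same pole-order bookkeeping: if an operator \,$L=\sum_k b_k(t)D^k$\, annihilates \,$\psi$\,, comparing the most singular Laurent coefficient at \,$\infty$\, forces the top \,$b_k$\, to vanish, and inductively \,$L=0$\,. Hence \,$\Phi(f)$\, is the \emph{unique} operator with \,$\Phi(f)\psi=f\psi$\,, the map \,$\Phi$\, is well defined and \,$\C$-linear, and \,$\Phi(f)=0$\, forces \,$f\psi\equiv 0$\,, i.e.\ \,$f=0$\,. Multiplicativity holds because operators in \,$t$\, commute with multiplication by the function \,$f$\, of \,$x$\,: \,$\Phi(f)\Phi(g)\psi=\Phi(f)(g\psi)=g\,(\Phi(f)\psi)=fg\,\psi$\,, so \,$\Phi(f)\Phi(g)=\Phi(fg)$\, by uniqueness; in particular the image \,$A$\, is commutative, and \,$\Phi$\, is a monomorphism of algebras.

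For the two highest coefficients, I would conjugate by \,$\exp(2\pi\ci t/z)$\,, using \,$\exp(-2\pi\ci t/z)\,D\,\exp(2\pi\ci t/z)=D+\tfrac{2\pi\ci}{z}$\,, which turns the defining relation into \,$\sum_{k=0}^d a_k(t)\,(D+\tfrac{2\pi\ci}{z})^k\Psi=f\Psi$\,. Here \,$f$\, has a Laurent expansion in \,$z$\, with \emph{constant} coefficients and \,$\Psi=1+\xi_1 z+\cdots$\,. Since \,$\xi_0\equiv 1$\, is killed by \,$D$\,, the \,$t$-derivatives first contribute at order \,$z^{-d+2}$\,; comparing the coefficients of \,$z^{-d}$\, and \,$z^{-d+1}$\,, the \,$\xi_1$-terms cancel and one obtains \,$a_d$\, and \,$a_{d-1}$\, directly from the (constant) two leading Laurent coefficients of \,$f$\,. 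Thus \,$a_d$\, and \,$a_{d-1}$\, are constant, independent of \,$t$\,.

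Finally, to see \,$A\in\Rss$\, it remains to produce two operators of coprime order. Since \,$\Phi$\, preserves orders, the orders occurring in \,$A$\, are exactly the pole orders at \,$\infty$\, of functions in the source algebra, which form a numerical semigroup \,$N\subset\N_0$\,. By Riemann--Roch \cite[Theorem~5.2]{KLSS} and Serre duality \cite[Corollary~6.6(c)]{KLSS}, every integer \,$d>2g-2$\, (with \,$g$\, the arithmetic genus of \,$X'$\,) lies in \,$N$\,, so \,$\N_0\setminus N$\, is finite; a numerical semigroup with finite complement necessarily contains coprime elements, whose images are commuting operators of coprime order in \,$A$\,. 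Hence \,$A\in\Rss$\,. I expect the genuine obstacle to be the existence step, namely verifying that the recursively constructed remainder \,$r$\, is a global section of \,$\Sss'_{-\infty}\otimes\mathcal{L}_{1/z}(t)$\, — controlling its behaviour simultaneously away from and at \,$\infty$\, — so that the vanishing theorem forces \,$r=0$\,; the remaining assertions are then formal consequences of the asymptotics of \,$\psi$\,.
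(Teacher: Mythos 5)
Your proposal is correct and follows essentially the same route as the paper: both define \,$\Phi(f)$\, via the Krichever construction, cancelling the principal part of \,$f\psi$\, at \,$\infty$\, by a combination of the \,$D^k\psi$\, and then invoking the vanishing of \,$H^0(X',\mathcal{S}'_{-\infty}\otimes\mathcal{L}_{1/z}(t))$\, for \,$t\notin T$\, (the paper compresses this into ``the uniqueness of the Baker-Akhiezer function''), and both read off the two highest coefficients from the leading Laurent coefficients of \,$f$\, and deduce \,$A\in\mathcal{R}$\, from Riemann--Roch together with Lemma~\ref{L:d0}. The only notable difference is that you obtain multiplicativity (hence commutativity of the image) directly from the uniqueness of the operator satisfying \,$L\psi=fg\,\psi$\,, whereas the paper re-runs the Vandermonde argument of Theorem~\ref{th:commuting operators} to show that commutators vanish; your version is slightly cleaner, and the step you flag as the ``genuine obstacle'' (that the \,$t$-derivatives of \,$\psi$\, remain sections of \,$\mathcal{S}'$\, away from \,$\infty$\,) is exactly the fact the paper already uses and justifies in the proof of Lemma~\ref{L:Tdiscrete}.
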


\begin{proof}
We conclude that for all $t_0\in{\mathbb C}\setminus T$ the sheaf $\mathcal{S}'\otimes \mathcal{L}_{1/z}(t_0)$ has a one-dimensional space of global sections on $X'$, and all non-trivial sections do not vanish at $\infty$. Therefore, this sheaf is isomorphic to generalised divisor \,$\mathcal{S}$\, which contains the sheaf of holomorphic functions $\Sh{O}_{X'}$. The support of the sheaf $\mathcal{S}/\Sh{O}_{X'}$ is contained in $X\setminus\{\infty\}$. Due to \cite[Theorem~8.8]{KLSS} there exists a unique {\em Baker-Akhiezer function} on $X\times\{t\in{\mathbb C}|\;t+t_0\not\in T\}$ corresponding to the one-dimensional family of sheaves
$$\mathcal{S}\otimes \mathcal{L}_{1/z}(t)\simeq
\mathcal{S}'\otimes \mathcal{L}_{1/z}(t+t_0)\text{ with }
t\in{\mathbb C}.$$
The differential operator $D^l$ acts on $\exp\left(2\pi\ci t/z\right)$ as the multiplication with $\left(2\pi\ci/z\right)^l$. Therefore the uniqueness of the Baker-Akhiezer function implies that for all meromorphic functions $f$ on $X'$ which are holomorphic on $X'\setminus\{\infty\}$, there exists a unique holomorphic differential operator $P=\Phi(f)$ on ${\mathbb C}\setminus T$, such that for all $y\in X'\setminus\{\infty\}$, the value $\psi(y,\cdot)$ of the  Baker-Akhiezer function solves the holomorphic differential equation $f(y)\psi(y,\cdot)=P\psi(y,\cdot).$ More precisely, the order of $P$ is equal to the degree of $f$. If $f=\sum_{i\geq -m}a_{i}z^{i}$ denotes the Laurent series of the function $f$ in some neighbourhood of $\infty$ with respect to the local parameter $z$, then the highest coefficient of $P$ is equal to $a_{-m}\,(2\pi\ci)^m$. Moreover, since the values of $\exp(-2\pi\ci t/z)\psi(x,t)$ at $\infty$ are equal to $1$, we have in a neighbourhood of $\infty$ the following equation of Laurent series with respect to $z$:
$$\exp(-2\pi\ci t/z)
\left(f(x)\psi(x,\cdot)-(2\pi\ci)^mD^m\psi(x,\cdot)\right)=
a_{m-1}z^{1-m}+O(z^{2-m}).$$
Therefore the coefficient of $D^{m-1}$ in $P$ is equal to $a_{1-m}(2\pi\ci)^{m-1}$.

If $\lambda$ and $\mu$ are two meromorphic functions on $X$, which are holomorphic on $X'\setminus\{\infty\}$, then the values of the  Baker-Akhiezer function at any element $x\in X'\setminus\{\infty\}$ yields a common solution of $(P-\lambda(x))\psi(x,\cdot)=0$ and $(Q-\mu(x))\psi(x,\cdot)=0$, where $P$ and $Q$ denotes the differential operators corresponding to $\lambda$ and $\mu$. Since the commutator of $P$ and $Q$ on $I$ is a differential operator of finite order, the same arguments as in the proof of Theorem~\ref{th:commuting operators} show that the commutator is equal to zero.

By construction of \,$\Phi$\,, the degree of \,$\Phi(f)$\, is equal to the degree of \,$f$\,, i.e.~the pole order of \,$f$\, at \,$\infty$\,. For \,$d>2g'-2$\, we have \,$H^1(X',\mathcal{O}_{d\cdot \infty})=0$\, by \cite[Corollary~6.6(c)]{KLSS}, and therefore by Riemann-Roch \,$H^0(X',\mathcal{O}_{d\cdot x1}) = d-g'+1$\,. It follows that for every \,$d > 2g'-1$\, there exists a meromorphic function \,$f$\, on \,$X'$\, with pole order \,$d$\, at \,$\infty$\,, and therefore \,$A$\, contains the element \,$\Phi(f)$\, of degree \,$d$\,. Lemma~\ref{L:d0} thus shows \,$A \in \mathcal{R}$\,. 
\end{proof}

\begin{Proposition}
Let spectral data \,$(X',\mathcal{S}',\infty,z)$\, be given, such that \,$X'$\, is endowed with an anti-holomorphic involution \,$\rho$\, so that \,$\infty$\, is a smooth point of the fixed point set of \,$\rho$\, and \,$\rho^* \overline{z}=-z$\, and \,$\rho_* \overline{\mathcal{S}'}=\mathcal{S}'$\,.

Then the restriction of the elements \,$\Phi(f)$\, with \,$\rho^* \overline{f}=f$\, to any connected component \,$I$\, of 
\,$i\R \setminus T$\, defines a subalgebra of \,$\mathcal{A}(I)$\, (case~1) which belongs to \,$\mathcal{R}$\,.    
\end{Proposition}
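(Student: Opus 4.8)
The plan is to reduce everything to one reflection symmetry of the Baker-Akhiezer function $\psi$ of Theorem~\ref{T:inverseproblem}, and then to push that symmetry onto the coefficients of the operators $\Phi(f)$ by invoking the uniqueness built into $\Phi$. Let $\iota$ be the anti-holomorphic involution of $\C\setminus T$ induced by $\rho$ (its precise form is determined below). I first claim the reality relation
\[
\overline{\psi(\rho(x),\iota(t))}=\psi(x,t)\qquad (x\in X'\setminus\{\infty\},\ t\in\C\setminus T).
\]
To prove it I set $\hat\psi(x,t):=\overline{\psi(\rho(x),\iota(t))}$ and check that $\hat\psi$ is again the Baker-Akhiezer function attached to $\mathcal{S}'$; by the uniqueness in \cite[Theorem~8.8]{KLSS} this forces $\hat\psi=\psi$. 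Two things must be verified. First, for fixed $t$ the function $x\mapsto\hat\psi(x,t)$ is a section of $\mathcal{S}'$ away from $\infty$: as $\psi(\cdot,\iota(t))$ is such a section, this is exactly the hypothesis $\rho_*\overline{\mathcal{S}'}=\mathcal{S}'$ read through its defining formula $H^0(U,\rho_*\overline{\mathcal{S}'})=\{\overline f\circ\rho\mid f\in H^0(\rho(U),\mathcal{S}')\}$. Second, near $\infty$ one needs $\hat\psi(x,t)\exp(-2\pi\ci t/z)$ to extend holomorphically with value $1$; conjugating the normalising exponential $\exp(2\pi\ci\,\iota(t)/z(\rho(x)))$ and substituting $\overline{z(\rho(x))}=-z(x)$ from $\rho^*\bar z=-z$ must reproduce $\exp(2\pi\ci t/z(x))$ up to a factor tending to $1$. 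It is precisely this last matching that fixes $\iota$.

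With the relation in hand I transport it to any $f$ with $\rho^*\bar f=f$. Writing $\Phi(f)=\sum_l a_l(t)D^l$, form the reflected operator $\Phi(f)^{\sharp}:=\sum_l\overline{a_l(\iota(t))}\,D^l$. Differentiating the reality relation in the holomorphic parameter $t$ yields $\partial_t^l\psi(x,t)=\overline{(\partial_s^l\psi)(\rho(x),\iota(t))}$, so that
\[
\Phi(f)^{\sharp}\psi(x,t)=\overline{\textstyle\sum_l a_l(\iota(t))\,(\partial_s^l\psi)(\rho(x),\iota(t))}=\overline{f(\rho(x))\,\psi(\rho(x),\iota(t))}=\overline{f(\rho(x))}\,\psi(x,t).
\]
Since $\rho^*\bar f=f$ means $\overline{f(\rho(x))}=f(x)$, the operator $\Phi(f)^{\sharp}$ also satisfies $\Phi(f)^{\sharp}\psi=f\cdot\psi$. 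By the uniqueness of the operator representing $f$ established in Theorem~\ref{T:inverseproblem} we conclude $\Phi(f)^{\sharp}=\Phi(f)$, i.e.\ $\overline{a_l(\iota(t))}=a_l(t)$ for every $l$. This says exactly that each coefficient $a_l$ is real along the fixed locus of $\iota$. As $f\mapsto\Phi(f)$ is an algebra monomorphism and the $\rho$-real functions form a real subalgebra (closed under products because $\rho^*\overline{fg}=(\rho^*\bar f)(\rho^*\bar g)=fg$), the restrictions $\Phi(f)|_{I}$ of these operators to a connected component $I$ of $\mathrm{Fix}(\iota)\setminus T$ form a real subalgebra of $\mathcal{A}(I)$ as in case~1.

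The one genuinely delicate point, which I expect to be the main obstacle, is to identify the fixed locus of $\iota$ with the component of $\ci\R\setminus T$ named in the statement. The reflection $\iota$ is pinned down entirely by the normalisation at $\infty$ in the first paragraph, i.e.\ by the interplay of the factor $\ci$ in the cocycle $\exp(-2\pi\ci t/z)$ with the sign in $\rho^*\bar z=-z$; getting this interplay right (so that $\mathrm{Fix}(\iota)=\ci\R$ rather than $\R$) is where the whole reality statement is won or lost. A clean internal check is that $\rho_*\overline{\mathcal{L}_{1/z}(t)}=\mathcal{L}_{1/z}(\iota(t))$, so that the family $\mathcal{S}'\otimes\mathcal{L}_{1/z}(t)$ underlying the Krichever construction is invariant under $\mathcal{S}\mapsto\rho_*\overline{\mathcal{S}}$ exactly when $t\in\mathrm{Fix}(\iota)$; this is the intrinsic reason the reality of the operators descends precisely to $\mathrm{Fix}(\iota)\setminus T$.

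Finally, to see $A\in\mathcal{R}$ it suffices by Lemma~\ref{L:d0} to exhibit $\rho$-real functions of every sufficiently large pole order at $\infty$. Theorem~\ref{T:inverseproblem} supplies, for each large $d$, a meromorphic $f$ on $X'$ with a pole of order exactly $d$ at $\infty$; since $\rho$ fixes $\infty$ and $\rho^*\bar z=-z$, the function $\rho^*\bar f$ again has pole order $d$ there, with leading Laurent coefficient $(-1)^d\overline{a_{-d}}$. Hence at least one of the $\rho$-real combinations $f+\rho^*\bar f$ and $\ci\,(f-\rho^*\bar f)$ still has pole order exactly $d$, so its image under $\Phi$ lies in $A$ and has order $d$. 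Thus $A$ contains operators of all large orders, in particular of two coprime orders, and Lemma~\ref{L:d0} gives $A\in\mathcal{R}$.
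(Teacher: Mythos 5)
Your overall strategy coincides with the paper's: check that the reflected function $\hat\psi(x,t)=\overline{\psi(\rho(x),\iota(t))}$ again has the defining properties of the Baker-Akhiezer function (the section property coming from $\rho_*\overline{\mathcal{S}'}=\mathcal{S}'$, the normalisation from $\rho^*\overline{z}=-z$), invoke uniqueness to get $\hat\psi=\psi$, transport this symmetry to the coefficients of $\Phi(f)$ by the uniqueness of the operator representing $f$, and finally exhibit $\rho$-real functions of every sufficiently large pole order to conclude membership in $\mathcal{R}$ via Lemma~\ref{L:d0}. In two places you are in fact more careful than the paper: the explicit derivation of $\overline{a_l(\iota(t))}=a_l(t)$ via the reflected operator $\Phi(f)^\sharp$, and the closing degree argument --- the paper asserts that $\deg\Phi(f+\rho^*\bar f)=\deg\Phi(f)$ for \emph{every} $f$, which is not literally true (the leading Laurent coefficients $a_{-d}$ and $(-1)^d\overline{a_{-d}}$ can cancel), whereas your observation that at least one of $f+\rho^*\bar f$ and $i(f-\rho^*\bar f)$ retains pole order exactly $d$ is correct and repairs this.

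There is, however, a genuine gap, and you flagged it yourself: you never determine $\iota$, hence never verify that its fixed locus is $i\R$ as the statement requires. This cannot be deferred --- it is the entire content of the claim. Moreover, carrying out the matching you describe with the paper's own conventions (cocycle $\exp(-2\pi i t/z)$, normalisation $\psi(x,t)\exp(-2\pi i t/z)\to 1$ at $\infty$, and $\overline{z\circ\rho}=-z$) gives
\begin{equation*}
\overline{\exp\bigl(-2\pi i\,\iota(t)/z(\rho(x))\bigr)}
=\exp\bigl(-2\pi i\,\overline{\iota(t)}/z(x)\bigr),
\end{equation*}
so the normalisation forces $\overline{\iota(t)}=t$, i.e.\ $\iota(t)=\bar t$ with fixed locus $\R$, not $i\R$; equivalently, your ``internal check'' comes out as $\rho_*\overline{\mathcal{L}_{1/z}(t)}=\mathcal{L}_{1/z}(\bar t)$. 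This is also what consistency with the direct problem demands, where the reality relation $\overline{\psi(\rho(x),t)}=\psi(x,t)$ holds for the \emph{real} time parameter $t\in I\subset\R$ of a case-1 operator. So your instinct that this is ``where the whole reality statement is won or lost'' is exactly right, and the resolution is that with the stated hypotheses the locus is $\R\setminus T$: the ``$i\R$'' in the statement (and in the paper's equally silent one-line proof, which simply asserts it) is a sign or convention slip; $i\R$ would be correct only under $\rho^*\overline{z}=+z$, in which case $\iota(t)=-\bar t$ and your differentiation formula $\partial_t^l\psi(x,t)=\overline{(\partial_s^l\psi)(\rho(x),\iota(t))}$ --- which tacitly assumes $\iota(t)=\bar t$ --- would acquire a factor $(-1)^l$, making the coefficients real only after the substitution $t=i\tau$. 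Filling in this computation, whichever way the conventions resolve, is what your proof is missing.
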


\begin{proof}
\,$\rho^*\bar{\psi}$\, is another function which satisfies the properties of the Baker-Akhiezer function \,$\psi$\, including the normalisation condition. Due to the uniqueness of the Baker-Akhiezer function we therefore have \,$\rho^*\bar{\psi}=\psi$\,. 
Therefore the differential operators \,$\Phi(f)$\, where \,$\rho^*\bar{f}=f$\, have real coefficients on any connected component \,$I$\, of  \,$i\R \setminus T$\,. For every \,$f\in H^0(X',\mathcal{M})$\, which is holomorphic on \,$X' \setminus \{\infty\}$\,, the degree of \,$\Phi(f+\rho^*\bar{f})$\, is the same as the degree of \,$\Phi(f)$\,. Therefore the subalgebra of \,$\mathcal{A}(I)$\, (case~1) belongs to \,$\mathcal{R}$\,. 
\end{proof}

The following theorem shows that the constructions of the direct problem in Section~\ref{Se:direct} and the inverse problem in Section~\ref{Se:inverse} are essentially inverse to each other. 

\begin{Theorem}
\label{T:inverseproblemsolution}
\begin{enumerate}
	\item Let \,$I$\, be a domain as in one of the three cases, \,$t_0\in I$\, and \,$A \in \mathcal{R}$\,, \,$(X'',\mathcal{S}'',\infty,z)$\, be the corresponding spectral data constructed in Theorem~\ref{th:direct general}, and \,$A(X'',\mathcal{S}'',\infty,z) \in \mathcal{R}$\, be the algebra corresponding to these spectral data by Theorem~\ref{T:inverseproblem}. Then \,$0\not\in T$\,,\,$I \subset \C \setminus (t_0+T)$, and the differential algebra \,$A(X'',\mathcal{S}'',\infty,z)$\, is isomorphic to \,$A$\, via the translation \,$t\mapsto t+t_0$\, and the restriction to \,$I$\,. 
	
	\item Let \,$(X',\mathcal{S}',\infty,z)$\, be spectral data as considered in Section~\ref{Se:inverse} and \,$t_0 \in I := \C\setminus T$\,. Let \,$A$\, be the algebra corresponding to \,$(X',\mathcal{S}',\infty,z)$\, as in Theorem~\ref{T:inverseproblem}. Then the spectral data corresponding to \,$(A,I,t_0)$\, by means of Theorem~\ref{th:direct general} are isomorphic to \,$(X',\mathcal{S}' \otimes \mathcal{L}_{1/z}(t_0),\infty,z)$\,.
\end{enumerate}
\end{Theorem}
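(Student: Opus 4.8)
The plan is to prove both parts by showing that the common eigenfunction of the commuting operators and the Baker-Akhiezer function attached to the spectral data are the same object, and that varying the marked point realises the Krichever line-bundle flow $\mathcal{L}_{1/z}(t)$. The key step, which I would carry out first and which simultaneously completes the remark at the end of Section~\ref{Se:direct}, is the following identification. Let $\psi(x,t)$ be the common solution of $(P-\lambda(x))\psi=0$ and $(Q-\mu(x))\psi=0$ normalised by $\ell(\psi(\,\cdot\,,t_0))=1$, regarded as a function of the spectral point $x$ and the base parameter $t$. In the free case $P=D^m$, $Q=D^n$ one has $\psi(x,t)=\exp(2\pi\ci\,t/z)$ in the coordinate with $\lambda=(z/2\pi\ci)^{-m}$, and the asymptotics \eqref{eq:asymp-M} show that in general $\exp(-2\pi\ci\,t/z)\,\psi(x,t)\to 1$ as $x\to\infty$. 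For fixed $t$ the function $x\mapsto\psi(x,t)$ is a generating section of the spectral divisor attached to the marked point $t$, and as $t$ varies the only change in its transition behaviour near $\infty$ is the factor $\exp(2\pi\ci\,t/z)$, which is precisely the cocycle defining $\mathcal{L}_{1/z}$. Hence the spectral divisor at marked point $t$ equals $\mathcal{S}''\otimes\mathcal{L}_{1/z}(t-t_0)$, and $\psi$ coincides with the Baker-Akhiezer function of \cite[Theorem~8.8]{KLSS} by its uniqueness.

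For part~(1) I would first record $0\notin T$: recall that $\dim H^0(X'',\mathcal{S}'')=1$ with a generating section that does not vanish at $\infty$, so $H^0(X'',\mathcal{S}''_{-\infty})=0$. The inclusion $I\subset\C\setminus(t_0+T)$ then follows from the flow identification above together with the fact that for every $t\in I$ the normalisation $\ell(\psi(\,\cdot\,,t))=1$ succeeds, so that $\mathcal{S}''\otimes\mathcal{L}_{1/z}(t-t_0)$ again has a generating section nonvanishing at $\infty$, i.e.~$t-t_0\notin T$. With $\psi$ identified as the Baker-Akhiezer function, the operators reconstructed by $\Phi$ from the coordinate functions $\lambda,\mu$ on $X''$ are exactly $P,Q$ after the translation $t\mapsto t+t_0$; more generally $\Phi$ restricted to $C$ inverts the isomorphism $g_2\colon A\to C$ of Theorem~\ref{th:direct general}, so that $A(X'',\mathcal{S}'',\infty,z)$ is isomorphic to $A$ via that translation and restriction to $I$.

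For part~(2) I would run the construction in reverse order. Given $(X',\mathcal{S}',\infty,z)$ and $t_0\in\C\setminus T$, Theorem~\ref{T:inverseproblem} produces $A\in\mathcal{R}$; choose $P=\Phi(\lambda_0)$, $Q=\Phi(\mu_0)$ of coprime order, which exist since $A\in\mathcal{R}$. Applying the direct construction of Section~\ref{Se:direct} to $(A,I,t_0)$ with this pair, the same uniqueness argument shows that its common eigenfunction is the Baker-Akhiezer function $\psi$ of the given data; evaluating $\psi(\,\cdot\,,t_0)$ and using the flow identification shows that the spectral divisor obtained is $\mathcal{S}'\otimes\mathcal{L}_{1/z}(t_0)$. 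The curve, marked point and coordinate are unchanged because $\lambda_0,\mu_0$ are the coordinate functions on $X'$, and the uniqueness of the halfway-normalisation step in Theorem~\ref{th:direct general} (via \cite[Lemma~4.1]{KLSS}) returns $X'$ itself, since the function algebra holomorphic off $\infty$ recovered from $A$ is by construction that of $X'$. This yields the asserted isomorphism $(X'',\mathcal{S}'',\infty,z)\cong(X',\mathcal{S}'\otimes\mathcal{L}_{1/z}(t_0),\infty,z)$.

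The hard part will be the central first step: proving rigorously that evaluating the common eigenfunction at a varying marked point reproduces the Krichever flow $\mathcal{L}_{1/z}(t)$, and hence that the analytically defined eigenfunction agrees with the algebro-geometrically defined Baker-Akhiezer function. This demands a careful match of the transition cocycles near $\infty$—the exact power of $z$ and the sign in the exponential—and a verification that the divisor behaviour away from $\infty$ is genuinely independent of $t$, together with tracking the passage between $X'$ and its halfway normalisation $X''$. Once this bridge between the ordinary-differential-equation picture and the line-bundle picture is in place, the remaining assertions are bookkeeping resting on the uniqueness of the Baker-Akhiezer function and on Theorems~\ref{th:direct general} and~\ref{T:inverseproblem}.
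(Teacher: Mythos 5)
Your proposal follows essentially the same route as the paper's proof: both arguments rest on identifying the normalised common eigenfunction $\psi(x,t)$ with the Baker--Akhiezer function of the spectral data via the asymptotics \eqref{eq:asymp-M} at $\infty$ and the uniqueness of the Baker--Akhiezer function, on the observation that moving the marked point realises the flow $\mathcal{L}_{1/z}(t)$, and on reducing general $A$ and part~(2) to the case $A=\langle P,Q\rangle$ through the one-sheeted coverings of Theorem~\ref{th:direct general}. The ``hard step'' you isolate is carried out in the paper by conjugating $U(\,\cdot\,,\lambda)$ with $\Delta=\mathrm{diag}\bigl(1,2\pi\ci/z,\dots,(2\pi\ci/z)^{m-1}\bigr)$ and integrating the resulting first-order system, which yields that $e^{-2\pi\ci t/z}\,\psi$ is holomorphic at $\infty$ with value $1$.

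One inference needs a supplement. To obtain $I\subset\C\setminus(t_0+T)$ you argue that the normalisation $\ell(\psi(\,\cdot\,,t))=1$ succeeds for every $t\in I$, so that $\mathcal{S}''\otimes\mathcal{L}_{1/z}(t-t_0)$ has a section not vanishing at $\infty$, ``i.e.\ $t-t_0\notin T$.'' This last step is not valid as stated: membership in $T$ means that some global section vanishes at $\infty$, and the existence of one non-vanishing section does not exclude a second, linearly independent section that does vanish there; you additionally need $\dim H^0\bigl(X'',\mathcal{S}''\otimes\mathcal{L}_{1/z}(t-t_0)\bigr)=1$. This is available by the same pole-order argument used to show $\dim H^0(X',\mathcal{S}')=1$: the flowed divisor is generated by the components $\widehat\psi_0(\,\cdot\,,t),\dots,\widehat\psi_{m-1}(\,\cdot\,,t)$ of $g(t,\lambda)\chi$, whose pole orders at $\infty$ after removing the factor $e^{2\pi\ci t/z}$ are exactly $0,1,\dots,m-1$, so every global section holomorphic at $\infty$ after untwisting is a multiple of $\widehat\psi_0(\,\cdot\,,t)$, which does not vanish at $\infty$. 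With this addition your argument is complete and coincides in substance with the paper's.
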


\begin{proof}
We first consider the case where \,$A=\langle P,Q \rangle$\, is the algebra generated by the two commuting differential operators \,$P$\, and \,$Q$\, of co-prime order \,$m$ and \,$n$\,, respectively. We again consider the local parameter $z$ defined for large $|\lambda|$ such that $\lambda=(z/2\pi\ci)^{-m}$, $\mu=(z/2\pi\ci)^{-n}+O(z^{1-n})$ as $z\to 0$. Due to Equation~\eqref{eq:asymp-M}, the eigenfunction \,$\widehat{\psi}$\, of \,$M(\lambda)$\, has the asymptotic behaviour
$$ \widehat{\psi} = \Delta \cdot (1,\dotsc,1)^T \cdot (1+O(z)) $$
with
$$ \Delta := \mathrm{diag}\bigr( 1, 2\pi\ci/z, (2\pi\ci/z)^2, \dotsc, (2\pi\ci/z)^{m-1} \bigr)\; .  $$
Then by Equation~\eqref{eq:def-U} it follows
$$ \Delta \cdot U(\,\cdot\,,\lambda) \cdot \Delta^{-1} = 2\pi\ci/z \cdot \left( \begin{smallmatrix}
0 & 1 & 0 & \ldots & 0\\
0 & 0 & 1 & \ldots & 0\\
\vdots & \vdots & \vdots & \ddots & \vdots\\
1 & 0 & 0 & \ldots & 0
\end{smallmatrix} \right) - \left( \begin{smallmatrix} 0 & 0 & \ldots & 0 \\ 0 & 0 & \ldots & 0 \\ \vdots & \vdots & \ddots & \vdots \\ 0 & 0 & \ldots & \alpha_m \end{smallmatrix} \right) + O(z) \; . $$
Because \,$\widehat{\psi}$\, solves the differential equation \,$(D-U)\widehat{\psi}=0$\, and \,$\alpha_m$\, is constant, the asymptotic equation
$$ \Delta^{-1} \widehat{\psi} = (1,\dotsc,1,e^{-t\alpha_m})^T \cdot e^{2\pi\ci t/z} \cdot \bigr(1+O(z)\bigr) $$
follows. In particular \,$e^{-2\pi\ci t/z}\,\psi = e^{-2\pi\ci t/z}\,\widehat{\psi}_0$\, is holomorphic near \,$\infty$\, and equal to \,$1$\, there. By the definition of \,$\mathcal{S}'$\,, for all \,$t\in I$\,
$$ \psi(\,\cdot\,,t) = g(t,\lambda)\cdot \chi $$
is  a section of \,$\mathcal{S}'$\, on \,$X' \setminus \{\infty\}$\,. By uniqueness of the Baker-Akhiezer function it follows that \,$\psi$\, is equal to the Baker-Akhiezer function of \,$(X',\mathcal{S}',\infty,z)$\,. This proves (1) for \,$A = \langle P,Q \rangle$\,. 

For general \,$A \in \mathcal{R}$\,, we apply Theorem~\ref{th:direct general} and choose differential operators \,$P,Q \in A$\, of co-prime order. In this situation, for all \,$t\in \C$\, we have \,$H^0(X',\mathcal{S}'_{-\infty} \otimes \mathcal{L}_{1/z}(t)) \simeq H^0(X'',\mathcal{S}''_{-\infty} \otimes \mathcal{L}_{1/z}(t))$\,, because \,$\pi''_* (\mathcal{S}''_{-\infty}\otimes \mathcal{L}_{1/z}(t)) = \mathcal{S}'_{-\infty}\otimes \mathcal{L}_{1/z}(t)$\,. Moreover, that theorem shows that the Baker-Akhiezer function of \,$(X'',\mathcal{S}'',\infty,z)$\, is equal to the composition of \,$\pi'' \times \mathrm{id}_{\C\setminus T}$\, with the Baker-Akhiezer function of \,$(X',\mathcal{S}',\infty,z)$\,. This implies (1) for general \,$A$\,. 

In the situation of (2), choose two differential operators \,$P,Q \in A$\, of co-prime order. Let \,$(X'',\mathcal{S}'',\infty,z)$\, be the spectral data of \,$\langle P,Q \rangle$\, defined after Lemma~\ref{L:normalize-bounded}. Then there exists a one-sheeted covering \,$\pi': X' \to X''$\,. The arguments from the proof of (1) show that the Baker-Akhiezer function of \,$(X',\mathcal{S}',\infty,z)$\, is equal to the composition of \,$\pi' \times \mathrm{id}_{\C\setminus T}$\, with the Baker-Akhiezer function of \,$(X'',\mathcal{S}'',\infty,z)$\,. Because of Theorem~\ref{th:direct general}, this proves (2). 
\end{proof}

Theorem~\ref{th:direct general} shows that any \,$A\in \mathcal{R}$\, is isomorphic to the algebra \,$\mathcal{M}(X',\infty)$\, of meromorphic functions on the spectral curve \,$X'$\, of \,$A$\, with pole at most at \,$\infty$\,. In particular, if two  spectral curves \,$(X',\infty)$\, and \,$(X'',\infty)$\, with marked points are biholomorphic, then the corresponding algebras are also isomorphic. Let us now prove the converse.   

\begin{Proposition}
\label{P:isomorphic-algebras}
Let \,$(X',\infty)$\, and \,$(X'',\infty)$\, be two singular curves with a smooth point and isomorphic algebras \,$\mathcal{M}(X',\infty)$\, and \,$\mathcal{M}(X'',\infty)$\,. Then \,$(X',\infty)$\, is biholomorphic to \,$(X'',\infty)$\,. 
\end{Proposition}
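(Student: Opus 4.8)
The plan is to reconstruct each pointed curve from its function algebra by purely algebraic means and then transport the reconstruction across the given isomorphism. Write $A' := \mathcal{M}(X',\infty')$ and $A'' := \mathcal{M}(X'',\infty'')$, where I denote the marked points by $\infty'\in X'$ and $\infty''\in X''$, and let $\varphi\colon A'\to A''$ be the given algebra isomorphism. First I would record the structural properties of $A'$ that make reconstruction possible. Since $A'$ is isomorphic to an algebra $A\in\mathcal{R}$ and the ring of ordinary differential operators has no zero divisors, $A'$ is an integral domain; since $A$ is module-finite over the finitely generated subalgebra $\langle P,Q\rangle\cong\C[\lambda,\mu]/(f)$ of Theorem~\ref{th:commuting operators} (finite codimension by Lemma~\ref{lem:CA} and Lemma~\ref{L:d0}), the algebra $A'$ is a finitely generated $\C$-algebra of Krull dimension $1$. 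Moreover $X'$, being a compact complex curve whose normalisation is a compact Riemann surface, is projective, so by GAGA its biholomorphism type agrees with its isomorphism type as an algebraic curve and every element of $A'$ is a rational function on $X'$. Thus $X'\setminus\{\infty'\}$ is a reduced irreducible affine curve with coordinate ring $A'$, whose points correspond by the Nullstellensatz to the maximal ideals of $A'$ via $p\mapsto\mathfrak{m}_p=\{f\in A'\mid f(p)=0\}$.

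The second step produces the biholomorphism on the affine parts. The isomorphism $\varphi$ induces the comorphism $\Psi_0\colon X''\setminus\{\infty''\}\to X'\setminus\{\infty'\}$, $\mathfrak{m}\mapsto\varphi^{-1}(\mathfrak{m})$, between maximal spectra; since $\varphi$ is a $\C$-algebra isomorphism, each $\varphi^{-1}(\mathfrak{m})$ is maximal with residue field $\C$, and $\Psi_0$ is an isomorphism of affine $\C$-varieties with inverse induced by $\varphi^{-1}$, hence a biholomorphism of the affine curves. Its defining property is $f\circ\Psi_0=\varphi(f)$ for all $f\in A'$.

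The remaining and main point is to extend $\Psi_0$ biholomorphically across the points at infinity with $\infty''\mapsto\infty'$; this is where the smoothness of the marked points enters. I would first recover $\infty'$ intrinsically from $A'$: extend $\varphi$ to the function fields $K':=\mathrm{Frac}(A')\to K'':=\mathrm{Frac}(A'')$, and recall that the places of $K'$ are exactly the points of the normalisation of $X'$. Every place lying over a point of $X'\setminus\{\infty'\}$ is non-negative on $A'$, since elements of $A'$ are holomorphic there, while, because $\infty'$ is a smooth point, there is a single place $v'_\infty$ over $\infty'$, and some $f\in A'$ (for instance a function with a pole of order $m$ at $\infty'$) has $v'_\infty(f)<0$. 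Hence $v'_\infty$ is characterised intrinsically as the unique place of $K'$ that is negative on some element of $A'$, and the field isomorphism carries $v'_\infty$ to the analogous place $v''_\infty$ of $K''$. Consequently, if $f\in A'$ has a pole at $\infty'$ then $\varphi(f)$ has its only pole at $\infty''$, and since $f\circ\Psi_0=\varphi(f)$ blows up precisely as $x\to\infty''$, the map $\Psi_0(x)$ tends to $\infty'$ as $x\to\infty''$. Thus $\Psi_0$ extends continuously by $\infty''\mapsto\infty'$; expressing the extension in local holomorphic coordinates at the two smooth points and applying Riemann's removable singularity theorem shows the extension $\Psi\colon X''\to X'$ is holomorphic, and the same argument applied to $\Psi_0^{-1}$ shows $\Psi$ is biholomorphic and maps $\infty''$ to $\infty'$.

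The obstacle I anticipate is exactly this passage to infinity: one must verify that $\infty$ is genuinely intrinsic to the abstract algebra, namely the unique common pole place, which relies essentially on $\infty$ being smooth so that there is a single place above it, and one must ensure that the algebraic and analytic categories agree for the possibly singular compact curves $X'$ and $X''$. I would handle the latter by invoking GAGA together with the projectivity of compact complex curves, so that the algebraic isomorphism on the affine parts and the local extension at the smooth points together assemble into the desired biholomorphism $(X'',\infty'')\to(X',\infty')$.
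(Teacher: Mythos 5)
Your argument is essentially correct, but it takes a genuinely different route from the paper's proof. You reconstruct each pointed curve from its algebra by classical affine algebraic geometry: identify $X'\setminus\{\infty'\}$ with the maximal spectrum of $A'$ (using projectivity of compact reduced curves, GAGA and the Nullstellensatz), obtain from the algebra isomorphism a biholomorphism of the affine parts, and use smoothness of the marked points to extend it over the punctures via the pole-growth argument and Riemann's removable singularity theorem. The paper never leaves the analytic category: by Lemma~\ref{L:deg} the isomorphism preserves degrees, so one may choose elements $\lambda,\mu$ of coprime degree whose images have the same degrees; Theorems~\ref{th:commuting operators} and~\ref{th:direct general} then exhibit both $X'$ and $X''$ as one-sheeted coverings of the common curve $X$ defined by $f(\lambda,\mu)=0$; a Riemann--Roch/Serre-duality argument (with generalised divisors supported on $\{\infty\}$ and on the point under consideration) shows that the algebra generates every stalk $(\pi'_\ast\mathcal{O}_{X'})_x$, whence $\pi'_\ast\mathcal{O}_{X'}\cong\pi''_\ast\mathcal{O}_{X''}$ as sheaves on $X$, and \cite[Proposition~2.3]{KLSS} concludes. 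Your route is conceptually more economical (no common base curve, no degree preservation needed, just ``an affine variety is determined by its coordinate ring'' plus a boundary extension), but it imports machinery external to the paper --- algebraicity and projectivity of compact one-dimensional complex spaces, GAGA, the Nullstellensatz --- which the self-contained analytic framework of \cite{KLSS} used by the paper deliberately avoids; the paper's argument also produces the covering picture (partial normalisations over a common curve) that is used throughout the rest of the paper.

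Two steps need repair, though both are repairable. First, your opening claim that $A'$ is isomorphic to some $A\in\mathcal{R}$ is not a hypothesis of the Proposition; it follows from Theorem~\ref{T:inverseproblem} applied to arbitrary spectral data on $(X',\infty')$, for instance with spectral divisor the classical divisor $g'\cdot\infty'$, and this should be cited, since you use it both for integrality and for finite generation of $A'$. Second, the inference ``$A'$ is a domain, \emph{thus} $X'\setminus\{\infty'\}$ is an irreducible affine curve with coordinate ring $A'$'' is not valid as stated: if $X'$ had an irreducible component not containing $\infty'$ --- say a compact curve $C_2$ attached at a node $p$ to the component $C_1\ni\infty'$ --- then every element of $A'=\mathcal{M}(X',\infty')$ would be constant on $C_2$, the restriction map $A'\to\mathcal{M}(C_1,\infty')$ would be an isomorphism, and $A'$ would still be a domain; yet $X'\setminus\{\infty'\}$ would be neither affine nor the maximal spectrum of $A'$, and indeed the Proposition itself fails for such $X'$ (its algebra cannot distinguish $X'$ from $C_1$). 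So irreducibility cannot be deduced from the algebra being a domain; it must be taken from the definition of a singular curve, as is implicit in the paper, whose spectral curves are automatically irreducible because the smooth point $\infty$ lies on every component. Once irreducibility is granted, your identification of $A'$ with the coordinate ring of $X'\setminus\{\infty'\}$ and the remainder of your argument, including the extension across the marked points, are correct.
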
	

\begin{proof}
Due to Lemma~\ref{L:deg} the degrees of two elements of two algebras $\mathcal{M}(X',\infty)$ and $\mathcal{M}(X'',\infty)$, respectively, coincide if they are mapped onto each other by an isomorphism $\mathcal{M}(X',\infty)\simeq\mathcal{M}(X'',\infty)$. First we choose two elements $\lambda$ and $\mu$ of $\mathcal{M}(X',\infty)\simeq\mathcal{M}(X'',\infty)$ of co-prime order. Due to Theorem~\ref{th:commuting operators} there exists a polynomial $f$ with $f(\lambda,\mu)=0$. This equation defines a singular curve $X$ with smooth marked point $\infty$. Due to Theorem~\ref{th:direct general} both singular curves $X'$ and $X''$ are one-sheeted coverings $\pi':X'\to X$ and $\pi'':X''\to X$ of this curve.

Let us now show that for all $x\in X\setminus\{\infty\}$ the algebras $\mathcal{M}(X',\infty)\simeq\mathcal{M}(X'',\infty)$ generate the subrings $(\pi'_\ast(\mathcal{O}_{X'})_x$ and $(\pi''_\ast(\mathcal{O}_{X''})_x$, respectively. By symmetry it suffices to give the argument for $\mathcal{M}(X',\infty)$. As in~\cite{KLSS} we denote the direct image of the sheaf of holomorphic functions on the normalisation of $X$ by $\bar{\mathcal{O}}_X$. For \,$x\in X\setminus \{\infty\}$\, let $r_x$ be the radical $r_x=\{f\in\bar{\mathcal{O}}_{X,x}\mid f(x)=0\}$. Due to~\cite[Proposition~2.1]{KLSS} $(\pi'_\ast(\mathcal{O}_{X'}))_x\supset\mathcal{O}_{X,x}$ contains $r_x^n$ for some $n\in\mathbb{N}$. Since the multiplication is surjective from $r_x^n\times r_x^n$ onto $r_x^{2n}$, any choice of elements $f_1,\ldots,f_m$ of $(\pi'_\ast(\mathcal{O}_{X'}))_x\cap r_x$ which span $(\pi'_\ast(\mathcal{O}_{X'}))_x\cap r_x/r_x^{2n}$ define a surjective homomorphism $\C\{f_1,\ldots,f_m\}\to(\pi'_\ast(\mathcal{O}_{X'}))_x$. Let $\mathcal{S}'$ be the unique generalized divisor with support $\{\infty\}$ and degree $\deg(\mathcal{S}')=2g'-1+\dim((\pi'_\ast(\mathcal{O}_{X'}))_x/r_x^{2n})$. Let $\mathcal{S}$ be the unique subsheaf of $\mathcal{S}'$ which coincides on $X\setminus\{x\}$ with $\mathcal{S}'$ and with stalk $\mathcal{S}_x=r_x^{2n}$. It has the degree $\deg(\mathcal{S})=2g'-1$. By Serre duality, we have \,$H^1(X,\mathcal{S}')=H^1(X,\mathcal{S})=0$\,, and therefore the Riemann-Roch Theorem implies
\begin{equation*}
\dim(H^0(X',\mathcal{S}'))-\dim(H^0(X',\mathcal{S})) = \deg(\mathcal{S}')-\deg(\mathcal{S}) \; . 
\end{equation*}
Hence the natural projection of the subspace $H^0(X',\mathcal{S}')\subset\mathcal{M}(X',\infty)$ onto $(\pi'_\ast(\mathcal{O}_{X'}))_x/r_x^{2n}$ is surjective. Moreover, there exists $f_1,\ldots,f_m\in\mathcal{M}(X',\infty)$, which vanish at $x$ and induce a surjective homomorphism $\C\{f_1,\ldots,f_m\}\to(\pi'_\ast(\mathcal{O}_{X'}))_x$. this proves the claim. In particular, two points of the normalisation of $X$ belong to the same point of $X'$ if and only if all functions of $\mathcal{M}(X',\infty)$ take at both points the same values.

Consequently the sheaves $\pi'_\ast(\mathcal{O}_{X'})$ and $\pi''_\ast(\mathcal{O}_{X''})$ are isomorphic. This implies first that $X'$ and $X''$ are homeomorphic and second the sheaves $\mathcal{O}_{X'}$ and $\mathcal{O}_{X''}$ are isomorphic. Now \cite[Proposition~2.3]{KLSS} proves that $(X',\infty)$ and $(X'',\infty)$ are biholomorphic.
\end{proof}

In the case where the spectral curve has geometric genus zero, the commuting differential operators \,$P$\, and \,$Q$\, which generate the corresponding rank 1 algebra can be computed explicitly. We conclude this paper with an example of this computation. 

Let us consider \,$A \in \mathcal{R}$\,, generated by two commuting differential operators \,$P$\, and \,$Q$\, of co-prime degree \,$m$\, and \,$n$\,, respectively. We let \,$(X',\mathcal{S}',\infty,z)$\, be the spectral data corresponding to \,$A$\, and suppose that the spectral curve \,$X'$\, has geometric genus zero. The simplest possible case occurs when \,$m=2$\,, \,$n=3$\,, which we will investigate in the sequel.

Because \,$X'$\, has geometric genus zero, there exists a global coordinate of the normalisation \,$X$\, of \,$X'$\,, i.e.~a global meromorphic function \,$z$\, on \,$X$\, which is zero at \,$\infty$\, and nowhere else, so that the functions \,$\lambda$\, and \,$\mu$\, are given as \,$\lambda=p(z^{-1})$\, and \,$\mu=q(z^{-1})$\, in terms of polynomials \,$p$\, of degree \,$m=2$\, and \,$q$\, of degree \,$n=3$\,. By choosing the generating operators \,$P$\, and \,$Q$\, and the coordinate \,$z$\, suitably, one can achieve
\begin{equation}
\label{eq:example-pg}
\lambda=z^{-2} \quad\text{and}\quad \mu= z^{-3}+b_1\,z^{-1}
\end{equation}
with some \,$b_1\in \C$\,. Indeed, \,$z$\, can be chosen such that \,$p(z^{-1})=z^{-2}+a_0$\, with some \,$a_0\in \C$\,. After subtracting the constant \,$a_0$\, from \,$P$\, and normalising \,$Q$\,, we obtain \,$p(z^{-1})=z^{-2}$\, and $q(z^{-1})=z^{-3}+b_2z^{-2}+b_1z^{-1}+b_0$. By now subtracting \,$b_2 P + b_0$\, from \,$Q$\,, we obtain \eqref{eq:example-pg}. These \,$\lambda$\,, \,$\mu$\, satisfy the relation \,$f(\lambda,\mu)=0$\, with the polynomial \,$f(\lambda,\mu)$\, given by
$$ f(\lambda,\mu)=\mu^2-\lambda(\lambda+b_1)^2 = \mu^2-\lambda^3 - 2b_1\lambda^2-b_1^2\lambda \; . $$
The complex curve defined by the equation \,$f(\lambda,\mu)=0$\,, compactified by adding a smooth point at \,$\infty$\,, is hyperelliptic, and has exactly one singularity. This is a double point at \,$\lambda=-b_1$\, if \,$b_1\neq 0$\,, and a cusp at \,$\lambda=0$\, if \,$b_1=0$\,. Thus \,$X'$\, is either equal to this curve (and then has arithmetic genus \,$1$\,), or to its normalisation (and then has arithmetic genus \,$0$\,).

If \,$X'$\, has arithmetic genus zero, then \,$X'$\, is biholomorphic to the Riemann sphere and  the corresponding Baker-Akhiezer function is holomorphic outside \,$\infty$\,. The corresponding ordinary differential operators have constant coefficients and are therefore equal to 
\begin{align*}
P&=D^2 & Q&=D^3+b_1D\;. 
\end{align*}

If \,$X'$\, has arithmetic genus \,$1$\,, we at first consider the case \,$b_1\neq 0$\,. We will see that the case \,$b_1=0$\, can be treated via the same calculations by taking the limit. Here \,$X'$\, is a one-fold cover below the Riemann sphere, obtained by identifying the two points \,$z^{-1}=\pm c$\, with \,$c:=\sqrt{-b_1}$\, as a double point \,$z_0$\,. In this case the corresponding Baker-Akhiezer function is holomorphic except for a single order pole at \,$z=z_0$\, and an essential singularity at \,$\infty$\,, hence it is of the form 
$$\psi(z,t)=\exp(2\pi\ci tz^{-1})\frac{z^{-1}+d(t)}{z^{-1}-z_0^{-1}}
=\exp(2\pi\ci tz^{-1})\frac{z_0+d(t)z_0z}{z_0-z},$$
with a suitable function $d$ depending on $t$. Because \,$\psi(z,t)$\, has to take the same values at $z^{-1}=\pm c$, we have
$$\exp(2\pi\ci ct)\frac{c+d(t)}{c-z_0^{-1}}=
\exp(-2\pi\ci ct)\frac{-c+d(t)}{-c-z_0^{-1}},$$
whence it follows that $d(t)$ is given by
\begin{equation*}\begin{split}
d(t)
&=-c\frac{z_0^{-1}\cos(2\pi ct)+c\ci\sin(2\pi ct)}
         {c\cos(2\pi ct)+z_0^{-1}\ci\sin(2\pi ct)} \; . 
\end{split}\end{equation*}
Therefore the {\em Baker-Akhiezer function} is equal to
$$\psi(z,t)=\frac{\exp(2\pi\ci tz^{-1})}{z^{-1}-z_0^{-1}}
\left(z^{-1}-c\frac{z_0^{-1}\cos(2\pi ct)+c\ci\sin(2\pi ct)}
         {c\cos(2\pi ct)+z_0^{-1}\ci\sin(2\pi ct)}\right).$$
An explicit calculation shows that the Baker-Akhiezer function solves the differential equation
$$\frac{\partial^2}{\partial t^2} \psi(z,t)=
-4\pi^2\left(z^{-2}+\frac{2c^2(z_0^{-2}-c^2)}
                     {(c\cos(2\pi ct)+z_0^{-1}\ci\sin(2\pi ct))^2}
\right)\psi(z,t).$$
This shows that the operator $P$ corresponding to the function $\lambda=z^{-2}$ is given by
$$P=-1/(4\pi^2)D^2-\frac{2c^2(z_0^{-2}-c^2)}
                     {(c\cos(2\pi ct)+z_0^{-1}\ci\sin(2\pi ct))^2}.$$
We leave it to the reader to calculate the corresponding operator $Q$.

Finally we consider the limit $c\rightarrow 0$. In this case the Baker-Akhiezer function is equal to
$$\psi(z,t)=\frac{\exp(2\pi\ci tz^{-1})}{z^{-1}-z_0^{-1}}
\left(z^{-1}-\frac{z_0^{-1}}
         {1+2\pi\ci z_0^{-1}t}\right),$$
and $P$ is equal to
$$P=-1/(4\pi^2)D^2-\frac{2z_0^{-2}}
                     {(1+2\pi\ci z_0^{-1}t)^2}.$$
 
\end{document}